\documentclass[twocolumn]{autart}    
\usepackage{mathtools}
\usepackage{nccmath}
\usepackage[many]{tcolorbox}
\usepackage{graphicx}          
\usepackage[utf8]{inputenc}
\usepackage{caption}  
\usepackage{subcaption}
\usepackage{amsmath}    
\usepackage{amssymb}   
\usepackage{amsfonts}  
\usepackage{booktabs}  
\usepackage{xcolor}
\usepackage{enumitem}
\usepackage{bm}

\newtcolorbox{openbox}[1][]{
    breakable,
    enhanced,
    colback=white,
    colframe=black,
    boxrule=0.5pt,
    sharp corners,
    bottomrule at break=0pt,
    toprule at break=0pt,
    boxsep=0pt,       
    top=3pt,          
    bottom=3pt,       
    left=3pt,         
    right=3pt,        
    pad at break=2pt, 
    before skip=3pt,  
    after skip=4pt,   
    #1
}
\newenvironment{proof}{\begin{pf}}{\end{pf}}
\newenvironment{myproof}[1]
  {\par\noindent\textbf{Proof of #1.}\ \ } 
  {\ifhmode\unskip\fi\hspace*{\fill}\qed\par} 
\newcommand{\qedopen}{\hfill $\square$} 
\newcommand{\dd}{\mathop{}\!\mathrm{d}} 
\newtheorem{theorem}{Theorem}[section]
\newtheorem{lemma}{Lemma}[section]
\newtheorem{assumption}{Assumption}[section]
\newtheorem{proposition}{Proposition}[section]

\newtheorem{remark}{Remark}[section]
\newcommand{\rfb}[1]{\mbox{\rm
(\ref{#1})}\ifx\undefined\stillediting\else:\fbox{$#1$}\fi}
\newcommand{\dref}[1]{\mbox{\rm
(\ref{#1})}\ifx\undefined\stillediting\else:\fbox{$#1$}\fi}

\newcommand{\rrn}{{\mathbb R}^n}

\newcommand{\PPP}  {{\mathbf P}}
\newcommand{\n}  {{n_w}}
\newcommand{\rr}  {{\mathbb R}}

\newcommand{\Lscr} {\mathcal{L}}
\newcommand{\Hscr}{\mathcal{H}}
\newcommand{\Ns} {\mathcal{N}}
\newcommand{\col} {{\rm col}}
\newcommand{\diag} {{\rm diag}}

\newcommand{\Mc}{\mathcal{M}}

\renewcommand{\Re}{\mathrm{Re\;}}
\newcommand{\tauy}    {{\tau_y}}

\newcommand{\mt}[1]{\begin{bmatrix}#1\end{bmatrix}} 
\newcommand{\sm}[1]{\left[\begin{smallmatrix}#1\end{smallmatrix}\right]} 
\newcommand{\malign}[1]{
\begin{equation}
\begin{aligned}
#1
\end{aligned}
\end{equation}}

\begin{document}

\begin{frontmatter}

\title{Output Regulation for Linear Parabolic Systems Using Finite-Dimensional Tracking-Error-Based Control \thanksref{footnoteinfo}} 

\thanks[footnoteinfo]{This paper was not presented at any IFAC 
meeting. Corresponding author Bingsen Li.}

\author[Tau,Csu]{Bingsen Li}\ead{bingsenli9@gmail.com},    
\author[Tau]{Emilia Fridman}\ead{emilia@tauex.tau.ac.il},               
\author[Tau]{George Weiss}\ead{gweiss@tauex.tau.ac.il}  

\address[Tau]{School of Electrical \& Computer Engineering, Tel Aviv University, Tel-Aviv, Israel}  
\address[Csu]{School of Mathematics and Statistics, Central South University, Changsha, PR China}             

\begin{keyword}                           
output regulation; parabolic systems; finite-dimensional control; internal model principle, unstable exosystem.          
\end{keyword}                             

\begin{abstract}
This paper addresses the {output regulation problem} for 1-D diffusion-reaction system, where both the disturbance and reference signals are generated by an {unstable exosystem}. We propose a constructive approach to the design of a {finite-dimensional tracking error-based regulator} for this class of possibly unstable systems. Based on the {regulator equations}, a {combined plant} is derived, converting the output regulation problem into a {partial stabilization problem} for the combined system. Using the modal decomposition method, the observability of the truncated modes  is characterized under appropriate  transmission zeros and observability conditions. For the controller design, unlike in the standard stabilization case, where the observer gain is dependent on the unstable modes only, the observer gain here has full order due to the coupling introduced by the exosystem. We prove that the  {observer gain} can be designed such that its norm remains {uniformly bounded} with respect to the dimension of the observer. {LMI-based conditions} are provided for determining the observer dimension, and it is shown that the LMI is feasible for a sufficiently large dimension. Finally, the output regulation problem in the presence of unknown time-varying measurement delays is analyzed. Numerical examples are provided to validate the theoretical results.
\end{abstract}

\end{frontmatter}

\section{Introduction}

The problem of output regulation, which aims to design a controller that ensures that the system output tracks a reference signal and rejects external disturbances, is a fundamental topic in control theory. Since the seminal works \cite{FrancisWonham1976} and \cite{davison1976multivariable}, the internal model principle has become the cornerstone for solving this problem for finite-dimensional linear systems. This principle has subsequently been extended to nonlinear systems,  also with nonlinear exosystems and/or delays; see, for instance, \cite{byrnes1997output}, \cite{fridman2003output}, \cite{knobloch2012topics}, and \cite{Huang2004book}. It was always regarded as desirable to obtain a low (or even minimal) order controller that can solve the output regulation problem, see \cite{natarajan2019minimal}.

Over the past decades, significant efforts have been devoted to extending the output regulation theory to partial differential equation (PDE) systems. The work in \cite{bymes2000output} considered the output regulation problem for PDEs with finite-dimensional exosystems, focusing on plants with bounded control and observation operators. In \cite{natarajan2014state}, the state feedback regulator problem for regular linear systems was solved. Furthermore, \cite{paunonen2017robust} proposed a framework to design robust controllers for error feedback regulation for linear regular systems where the exosystem is infinite-dimensional. The output regulation of impedance passive regular linear systems was investigated under various stability conditions in \cite{Lassi19Passive}. Extensive research has also been conducted for specific PDEs; see, for instance, \cite{DEUTSCHER201556}, 
\cite{guo2020robust}, \cite{LI202501}, and \cite{GuoZhao2022Adptive}.

The above-mentioned works primarily rely on PDE-based observers, which lead to infinite-dimensional controllers. However, practical implementation often necessitates the design of finite-dimensional controllers. Finite-dimensional observer-based controllers for the stabilization of infinite-dimensional systems have been developed using the modal decomposition approach in \cite{RuthCurtain1982}, \cite{Sakawa1983}, \cite{Schumacher1983}, \cite{Deutscher01082010}, \cite{KATZ2020109285}, and \cite{lhachemi2021finite}. Recently, a constructive LMI-based method for finite-dimensional observer-based control of 1-D parabolic PDEs was presented in \cite{KATZ2020109285}. This framework was later extended to systems with unbounded operators and time delays \cite{KATZ2021109364}, \cite{katz2021finite}, \cite{katz2022delayed}. It has been adapted also for basic setpoint control with a known constant reference  \cite{lhachemi2021finite} and with a constant boundary disturbance \cite{lhachemi2026tac}. 

\textbf{In this paper, we propose a constructive approach to design a finite-dimensional tracking error-based regulator for a class of one-dimensional boundary controlled reaction-diffusion equations.} 
As a prerequisite for the controller design, we establish necessary and sufficient conditions for the observability of the truncated combined system (plant+exosystem). Compared with the stabilization problem (e.g., \cite{KATZ2020109285}, \cite{katz2022delayed}, \cite{lhachemi2021finite}), the presence of an exosystem introduces significant challenges, rendering  the previous controller design mechanism and the LMI feasibility analysis methods inapplicable. Consequently, we propose a novel controller design approach—in which the observer gain has full order, with a fresh perspective on feasibility analysis. The specific novelties and challenges of this work are detailed in Remark \ref{Rem:noval}.

Regarding the regulation problem with fi\-nite-di\-men\-sional controllers, \cite{RebarberWeiss2003} proposed a low-gain finite-dimensional controller for exponentially stable well-posed linear systems, and \cite{Xu2019} considered a class of linear time-delay systems, also requiring exponential stability.  In \cite{paunonen2019reduced}, a robust finite-dimensional regulator design was studied for linear systems with analytic semigroups and bounded input/output operators, based on a Galerkin approximation of the original PDEs. However, the approximation dimension must be sufficiently large, and no explicit characterization of the required dimension was provided. Reference \cite{LI2025267} proposed a simple finite-dimensional controller for linear well-posed impedance passive systems. By employing dual observers \cite{Luenberger1971596}, the work in \cite{Deutscher2013} extended the stabilization framework for Riesz-spectral systems \cite{Deutscher01082010} to the output regulation problem, assuming the availability of both the reference signal and an auxiliary output. However, as noted in \cite{Deutscher01082010}, determining the required controller dimension may be difficult to calculate and is highly conservative. Compared with existing finite-dimensional regulator methods such as \cite{Deutscher2013} and \cite{paunonen2019reduced}, our method provides a more efficient and LMI-based quantitative approach to determine the controller order. Furthermore, our approach is capable of handling boundary control and accommodates measurements subject to unknown time-varying delays. 

Other related works include \cite{guo2020robust} and \cite{GuoZhao2022Adptive}, the latter of which was recently extended to include known constant input delays \cite{wang2026output}. These methods rely on PDE-based observers derived from the separation principle, which inherently result in infinite-dimensional controllers. These strategies cannot be adapted to give finite-dimensional observers. This is because, in the regulation problem, the control input must converge to a non-zero steady state; the truncation of a PDE-based observer cannot estimate this desired steady state.

In Section \ref{sec:delay_regulation}, we analyze the output regulation problem with unknown time-varying measurement delays. While output regulation with delays has been investigated in various contexts (e.g., \cite{yoon2016robust}, \cite{fridman2003output}, \cite{lu2015robust}, \cite{LuLiuMutiagent}, \cite{wang2026output}), unknown time-varying measurement delays remain unexplored for PDEs. The ability to handle unknown time-varying measurement delays is another advantage of our method.

\textbf{Notation:}
$\mathcal{H}^n(0,1)$ is the Sobolev space of functions having square-integrable derivatives up to order $n$.  The Euclidean norm on $\mathbb{R}^n$ and the induced matrix norm are denoted by $\|\cdot\|$. $\mathbb{C}_+$ denotes the closed right half-plane, and $\Re \lambda$ is the real part of $\lambda$. For a matrix $P$, $P>0$ means $P$ is symmetric positive definite, and $*$ within a matrix denotes symmetric entries. For a linear operator $A$, $\rho(A)$ and $\sigma(A)$ denote the resolvent set and spectrum, respectively. Let $A$ generate a $C_0$-semigroup on a Hilbert space $Z$. The extrapolation space $Z_{-1}$ is the completion of $Z$ with respect to the norm $\|z\|_{-1} := \|(\beta I - A)^{-1} z\|_Z$ for some $\beta \in \rho(A)$.  $\operatorname{col}\{c_n\}_{n=i}^{j}$ denotes $\sm{c_i\\ \vdots \\ c_{j}}$, and $\operatorname{diag} \{a_n\}_{n=i}^{j}$ denotes the diagonal matrix with entries $a_i, \dots, a_j$. For $0<S\in \rr^{\n\times \n}$ and $x\in \rrn $ we denote $\|x\|_S=\sqrt{x^\top S x}.$ $L(X,Y)$ denotes the space of bounded operators from $X$ to $Y$. 
\section{Problem setup}
Consider a one-dimensional reaction-diffusion system defined on the spatial domain $x \in [0,1]$, with the state space $Z := L^2(0,1)$. The dynamics are governed by the following PDE:
\begin{equation}\label{eq:heat2}
\begin{cases}
    \begin{aligned}
        z^o_t(x,t) &= \frac{\partial}{\partial x} \left(p(x) \frac{\partial}{\partial x} z^o(x,t) \right) \\
                   &\quad + q(x)z^o(x,t) + g(x)\bar{d}_1(t),
    \end{aligned} \\
    z^o_x(0,t) = \bar{d}_2(t), \\ 
    z^o_x(1,t) = u(t) + \bar{d}_3(t), 
\end{cases}
\end{equation}
where $t \geqslant 0$, $z^o(\cdot,t) $ denotes the system state, and $u(t) \in \mathbb{R}$ is the control input. The coefficients satisfy $p(\cdot) \in {C}^2([0,1], \mathbb{R})$ and $q(\cdot) \in {C}^1([0,1], \mathbb{R})$, subject to the bounds:
\begin{equation}\label{coeffi:pq}
    0 < p_m \leqslant p(x) \leqslant p_M, \quad  q_m\leqslant q(x) \leqslant q_M.
\end{equation}
The terms $\bar{d}_i(t)$ ($i=1,2,3$) represent external \textit{disturbances}, and the function $g(\cdot) \in L^2(0,1)$ specifies the spatial distribution of the in-domain disturbance.

We assume that the reference signal $r(t)$ and the disturbances $\bar{d}_i(t)$ are generated by a linear finite-dimensional \textit{exosystem} of the form
\begin{equation} \label{eq:exosystem1}
\begin{aligned}
    \dot{w}(t) &= S w(t), \quad w(0) = w_0 \in \mathbb{R}^\n, \\
    r(t) &= -Q w(t), \\ 
    \bar{d}_i(t) &= d_i w(t), \quad i=1,2,3,
\end{aligned}
\end{equation}
where $w(t) \in W := \mathbb{C}^\n$ is the exosystem state. Here, $Q, d_i\in \mathbb{R}^{1 \times \n}$  are unknown. The spectrum $\sigma(S)$ of $S \in \mathbb{R}^{\n \times \n}$ is known and contained in the closed right half-plane ${\mathbb{C}}_+$. In most output regulation problems, $\sigma(S)$  lies on the imaginary axis.

The regulated output $y(t)$ is defined as
\begin{equation} \label{eq:3.2}
    y(t) = \int_0^1 c(x)z^o(x,t) \dd x, \quad y \in Y := \mathbb{R},
\end{equation}
where $c(\cdot) \in L^2(0,1)$ is a weighting function.

We assume that the \textit{measurement} available to the controller is the \textit{tracking error} $e(t)$, defined as
\begin{equation}\label{tracking error}
    e(t) = y(t) - r(t).
\end{equation}
The objective of the output regulation problem is to design a controller such that $e(t)$ converges to zero asymptotically
 while ensuring the exponential stability of the closed-loop system in the absence of disturbances. 
 
Consider the Sturm-Liouville operator $-A$ defined by:
\begin{equation}\label{operator:SturmL}
\begin{split}
    (-A f)(x) = & -\frac{\dd}{\dd x} \left(p(x)\frac{\dd}{\dd x} f(x)\right) - q(x) f(x), \\
    \mathcal{D}(A) = & \left\{ f \in H^2(0,1) \mid f'(0) = f'(1) = 0 \right\},
\end{split}
\end{equation}
where the coefficients $p(x)$ and $q(x)$ satisfy the conditions given in \eqref{coeffi:pq}. 

The system \dref{eq:heat2}, \dref{eq:exosystem1} and \eqref{tracking error} can be reformulated in the abstract form:
 \begin{subequations}\label{eq:z_simp}
    \begin{align}
        &\dot{f}(t)=A f(t)+Bu(t)+P w(t), \\
        &\dot{w}(t)=Sw(t), \\
        &e(t)=C f(t)+Qw(t),
    \end{align}
\end{subequations}
where  $f(t)=z^o(\cdot,t)$, 
$B\in {L}(U, Z_{-1}), C\in {L}(Z, Y), P\in L(W, Z_{-1}),$ and $Q\in L(W, Y)$,
\[
 B=\delta_1,\ Cf=\langle c, f\rangle_Z , \ P= d_1(\cdot)-d_2 \delta_0+d_3 \delta_1. 
\]
Here $\delta_i, i=0,1,$ denotes the Dirac delta functional, i.e., $\delta_i(\phi)=\phi(i)$ for $\phi\in \mathcal{D}(A^*)$.  

\section{Controller design}
\subsection{Steady state formulation}\label{sec:ErrorSys}
To achieve asymptotic tracking, we seek a steady-state manifold parametrized by the mapping $z_{ss}^o(x,t) = \Pi(x)w(t)$ and a feedforward control $u_{ss}(t) = \Gamma w(t)$ that maintains a zero tracking error $e(t)=0$, while $z_{ss}^o$ and $u_{ss}$ satisfy \eqref{eq:heat2} and $w, r, \bar{d}_i$ satisfy \eqref{eq:exosystem1}. Here, $\Pi(\cdot) \in \Hscr^2(0,1; \mathbb{R}^{1 \times n_w})$ and $\Gamma \in \mathbb{R}^{1 \times n_w}$.

By substituting the steady-state ansatz $z^o(x,t) = \Pi(x)w(t)$ and $u(t)=\Gamma w(t)$ into the system dynamics \eqref{eq:heat2} and utilizing the exosystem dynamics \eqref{eq:exosystem1}, we obtain the boundary value problem
\begin{subequations}\label{eq:regulator1}
    \begin{align}
         & \frac{\dd}{\dd x}\left(p(x)\frac{\dd}{\dd x}\Pi(x)\right) +\Pi(x)(q(x)I - S) = -g(x)d_1, \\
         & \Pi'(0) = d_2, \qquad \Pi'(1) = \Gamma + d_3.
    \end{align}
\end{subequations}
 Furthermore, the zero-error constraint $y(t) - r(t) = 0$ on this manifold requires:
\begin{equation}\label{eq:regulator2}
    \int_0^1 c(x)\Pi(x)\dd x + Q = 0.
\end{equation}
Equations \eqref{eq:regulator1} and \eqref{eq:regulator2} constitute the \textit{regulator equations}.

Applying the Laplace transform to \eqref{eq:heat2}, by linearity, the transfer function of the open-loop system from the boundary input $u$ to the output $y$ is derived as:
\begin{equation} \label{eq:transfer_func}
    \mathbf{P}(s) = \frac{\mathbf{N}(s)}{\mathbf{D}(s)} = \frac{\int_0^1 c(x)\phi(x,s)\dd x}{\phi_x(1,s)},
\end{equation}
where $\phi(x, s)$ is the fundamental solution to the initial value problem (IVP):
\begin{equation} \label{eq:scalar_phi}
\begin{split}
    & -\frac{\dd}{\dd x}\left(p(x)\frac{\dd}{\dd x}\phi(x,s)\right) + (s-q(x))\phi(x,s) = 0, \\
    &  \phi(0,s)=1, \quad \phi_x(0,s)=0.
\end{split}
\end{equation}
Another representation is $\mathbf{P}(s) = \int_0^1 c(x)\psi(x,s) \dd x$, where $\psi(x,s)$ satisfies  \eqref{eq:scalar_phi} with boundary conditions $\psi_x(0,s)=0$ and $\psi_x(1,s)=1$, different from \eqref{eq:scalar_phi}. By linearity and homogeneity, both formulations are equivalent. The IVP formulation \eqref{eq:transfer_func}--\eqref{eq:scalar_phi} is superior for the solvability analysis of the regulator equations.  

To ensure the solvability of the regulation problem, we impose the following assumption, commonly referred to as the transmission zero condition:
\begin{assumption} \label{assump:1}
     The  transfer function $\mathbf{P}(s)$ in \eqref{eq:transfer_func} satisfies $\mathbf{N}(s)\neq 0$ for all $s \in \sigma(S)$.
\end{assumption}

\begin{proposition}\label{propo:solvabilityOfRegu}
The regulator equations \eqref{eq:regulator1}--\eqref{eq:regulator2} admit a unique solution pair $(\Pi, \Gamma)$ if and only if $\mathbf{N}(s)\neq 0$ for all $s \in \sigma(S)$.
\end{proposition}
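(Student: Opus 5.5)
The plan is to reduce the matrix boundary value problem \eqref{eq:regulator1}--\eqref{eq:regulator2} to a triangular family of scalar problems by putting $S$ in Jordan form. Each scalar problem is then solved with the initial value problem \eqref{eq:scalar_phi}, and the only free constant in each one is determined by an equation whose coefficient is $\mathbf{N}(\lambda)$.

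\textbf{Reduction to scalar problems.} Let $S=VJV^{-1}$ with $J$ in (complex) Jordan form, and set
\[
\tilde\Pi=\Pi V,\quad \tilde\Gamma=\Gamma V,\quad \tilde d_i=d_iV,\quad \tilde Q=QV .
\]
Since $\Pi S V=\tilde\Pi J$, column $k$ of $\tilde\Pi J$ equals $\lambda\tilde\Pi_k+\tilde\Pi_{k-1}$ when column $k$ lies inside a Jordan block with eigenvalue $\lambda$, and equals $\lambda\tilde\Pi_k$ when it starts a block. Multiplying \eqref{eq:regulator1}--\eqref{eq:regulator2} on the right by $V$ gives, column by column,
\[
(p\tilde\Pi_k')'+(q-\lambda)\tilde\Pi_k=-g\tilde d_{1,k}+\tilde\Pi_{k-1},\qquad \tilde\Pi_k'(0)=\tilde d_{2,k},
\]
\[
\tilde\Gamma_k=\tilde\Pi_k'(1)-\tilde d_{3,k},\qquad \int_0^1 c\,\tilde\Pi_k\,\dd x=-\tilde Q_k .
\]
The term $\tilde\Pi_{k-1}$ is absent when column $k$ starts a block. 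Because $\Gamma$ is a free unknown, the Neumann condition at $x=1$ imposes no constraint; it only defines $\tilde\Gamma_k$ once $\tilde\Pi_k$ is known.

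\textbf{Solving each column (sufficiency).} Columns are handled in order within each block, so $\tilde\Pi_{k-1}$ is already known when column $k$ is treated. Since $p\in C^2$ with $p\ge p_m>0$, $q\in C^1$ and $g\in L^2$, standard linear ODE theory gives a unique $\mathcal H^2$ solution of the inhomogeneous equation for any initial data $(\tilde\Pi_k(0),\tilde\Pi_k'(0))=(\alpha_k,\tilde d_{2,k})$. This solution has the form
\[
\tilde\Pi_k=\alpha_k\,\phi(\cdot,\lambda)+\eta_k ,
\]
where $\eta_k$ is the solution with zero initial value, initial slope $\tilde d_{2,k}$, and right-hand side $-g\tilde d_{1,k}+\tilde\Pi_{k-1}$. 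Substituting into the output constraint gives the scalar equation
\[
\alpha_k\,\mathbf N(\lambda)=-\tilde Q_k-\int_0^1 c\,\eta_k\,\dd x .
\]
If $\mathbf N(\lambda)\ne0$ for every $\lambda\in\sigma(S)$, each $\alpha_k$ is uniquely determined, and by induction along each Jordan chain all of $\tilde\Pi$ and $\tilde\Gamma$ are determined uniquely. Returning to $\Pi=\tilde\Pi V^{-1}$, $\Gamma=\tilde\Gamma V^{-1}$ gives the unique solution in the complexified setting. Realness follows from uniqueness: the data are real, so the complex conjugate of a solution is again a solution and must coincide with it.

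\textbf{Necessity.} Suppose $\mathbf N(\lambda)=0$ for some $\lambda\in\sigma(S)$, and let $\ell$ be a row vector with $\ell S=\lambda\ell$. Set
\[
\Pi_h(x)=\phi(x,\lambda)\,\ell,\qquad \Gamma_h=\phi_x(1,\lambda)\,\ell .
\]
This pair solves the homogeneous regulator equations, i.e.\ with $g,d_i,Q=0$: the ODE holds by \eqref{eq:scalar_phi}, $\Pi_h'(0)=0$, and $\int_0^1 c\,\Pi_h\,\dd x=\mathbf N(\lambda)\ell=0$. By linearity, $\Pi_h$ can be added to any solution, so uniqueness fails. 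Taking real or imaginary parts gives a real nonzero perturbation when $\lambda$ is complex. Moreover, in the scalar equation for $\alpha_k$ above, the coefficient of $\alpha_k$ vanishes, so for generic data $(Q,d_i)$ no solution exists at all.

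\textbf{Main obstacle.} The step I expect to need the most care is the bookkeeping of the Jordan-chain coupling. The point to verify is that the right-hand side of each column depends only on previously determined columns, so the coefficient multiplying the free constant $\alpha_k$ is always exactly $\mathbf N(\lambda)$, independent of the chain position. A smaller point is checking that $\tilde\Pi_k$ has the stated $\mathcal H^2$ regularity when $g$ is only in $L^2$. This follows by writing the equation as a first-order system in $(\tilde\Pi_k,p\tilde\Pi_k')$ with $L^2$ forcing.
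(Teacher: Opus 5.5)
Your proof is correct, but it takes a more hands-on route than the paper's.

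\textbf{The paper's route.} The paper works at the matrix level. It uses the fundamental matrix $\Psi$ of $-(p\Psi')'+\Psi(S-qI)=0$, $\Psi(0)=I$, $\Psi'(0)=0$, and writes $\Pi=\mathbf{K}\Psi+\Pi_p$. The output constraint becomes $\mathbf{K}\mathbf{M}=-Q-\int_0^1 c\,\Pi_p\,\dd x$ with $\mathbf{M}=\int_0^1 c\,\Psi\,\dd x$. The paper then checks $\mathbf{M}v=\mathbf{N}(\lambda)v$ on right eigenvectors of $S$ and invokes the spectral mapping theorem to get $\det\mathbf{M}=\prod_{\lambda\in\sigma(S)}\mathbf{N}(\lambda)$.

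\textbf{Your route.} You pass to Jordan coordinates and do forward substitution along each chain. In effect you solve the paper's system after conjugation by $V$, where $V^{-1}\mathbf{M}V$ is upper triangular with diagonal entries $\mathbf{N}(\lambda)$. Your $\alpha_k$ are exactly the entries of $\mathbf{K}V$.

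\textbf{What your route buys.} The defective case is handled explicitly. The paper's eigenvector computation only shows that each $\mathbf{N}(\lambda)$ is an eigenvalue of $\mathbf{M}$. Concluding that $\mathbf{M}$ has no other eigenvalues when $S$ is not diagonalizable, which is what sufficiency needs, tacitly uses $\Psi(x)=\phi(x,S)$, i.e.\ $\mathbf{M}=\mathbf{N}(S)$. The paper never states this. Your necessity argument via a left eigenvector is also the natural one, because $\mathbf{K}$ multiplies $\mathbf{M}$ from the left. Your $\Pi_h=\phi(\cdot,\lambda)\ell$ is precisely $\ell\Psi$, and $\ell\mathbf{M}=\mathbf{N}(\lambda)\ell$ exhibits the left null vector directly. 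You also supply the $\mathcal{H}^2$ regularity for $g\in L^2$, which the paper skips.

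\textbf{What the paper's route buys.} It stays in real arithmetic, so no realness-by-uniqueness step is needed. It also yields the compact closed-form determinant of $\mathbf{M}$.
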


The proof of Proposition \ref{propo:solvabilityOfRegu} is in the Appendix. 

%
%

Let us define the \textit{transient state} as 
\begin{equation}
    z(x,t) = z^o(x,t) - \Pi(x)w(t),
\end{equation}
 where $\Pi(x)$ is the solution to the regulator equations \eqref{eq:regulator1}. By substituting this into \eqref{eq:heat2} and utilizing the properties of the regulator equations, the dynamics of the transient state $z$ are governed by:
\begin{equation} \label{eq:heat}
\begin{aligned}
     & z_t(x,t) = \frac{\partial}{\partial x} \left(p(x) \frac{\partial}{\partial x} z(x,t) \right) + q(x)z(x,t), \\
     & z_x(0,t) = 0, \quad z_x(1,t) = u(t) - \Gamma w(t), \\
     & \dot{w}(t) = S w(t), \\
     & e(t) = \int_0^1 c(x)z(x,t) \dd x.
\end{aligned}
\end{equation}
Consider the combined system \eqref{eq:z_simp} formed by the plant and the exosystem  on $X:=Z\times W $: 
 \begin{equation}
 	\frac{\dd}{\dd t} \mt{ f(t) \\ w(t)}= \mathcal{A}^p \mt{ f(t) \\ w(t)} +B^pu(t),  \ \ e(t)=C^p  \mt{ f(t) \\ w(t)},
 \end{equation}
 where $
B^p=\Bigl(\begin{smallmatrix}B\\[2pt]0\end{smallmatrix}\Bigr),\
C^p=(C\ \; Q),
$
\[
\mathcal{A}^p=\biggl[
\begin{array}{cc}
A & P \\
0 & S
\end{array}
\biggr],\quad D(\mathcal{A}^p)= \{\sm{f\\ w} \in X| Af+ Pw \in Z\}.
 \] 
We know that \(\mathcal{A}^p\) generates an operator semigroup \(T^p(t)\) on $X$, see \cite[Lemma III.2]{natarajan2014state}.

\begin{assumption}\label{assump:ApCp}
 	The pair $\left( \mathcal{A}^p, C^p \right)$ is approximately observable in infinite time (as defined in \cite{tucsnak2009observation}).
\end{assumption}

This assumption is strictly stronger than assuming that the pair $(A,C)$ in \eqref{eq:z_simp} is approximately observable in infinite time, but it leads to no loss of generality. 
The justification for this was explained in the finite-dimensional setting in~\cite{FrancisBA1977Sicon}; see also~\cite[Proposition~1.4.1]{knobloch2012topics}.  
The following proposition extends this result to the infinite-dimensional case, showing that the unobservability of $\left(\mathcal{A}^p , C^p \right)$ merely indicates a redundancy in the exosystem.  
By eliminating the redundant exosystem variables, one can obtain an equivalent reduced combined system that is observable. 

\begin{proposition}\label{prop:operator-version}
Assume \((A,C)\) is approximately observable in infinite time but \((\mathcal{A}^p, C^p)\) is not. Then
 there exists a bounded invertible operator
$
\mathcal{R}$ on $Z\times W
$
such that, in the new coordinates \(\widetilde x^p=\mathcal{R} x^p, x^p\in X  \), 
\begin{equation}
\begin{aligned}
    \widetilde {\mathcal{A}}^p &= \mathcal{R} \mathcal{A}^p \mathcal{R}^{-1} = \Biggl[
        \begin{array}{cc}
        A & \widetilde P \\[2pt] 0 & \widetilde S
        \end{array}\Biggr] \\
    \widetilde B^p &= \Bigl[\begin{smallmatrix}B\\[2pt] 0\end{smallmatrix}\Bigr], \quad \widetilde C^p = [\,C\ \; \widetilde Q\,].
\end{aligned}
\end{equation}
where, after a suitable decomposition \(W=W_1\oplus W_2\),
\[
\widetilde S =
\biggl[
\begin{array}{cc}
S_{11} & 0 \\
S_{21} & S_{22}
\end{array}
\biggr],\qquad
\widetilde P=(\tilde{P}_1\ \; 0),\qquad
\widetilde Q=(Q_1\ \; 0).
\]
so that the $W_2$ component of $w$ has no influence on the other components of the state or on the output $e$.
Moreover, the reduced subsystem
\begin{equation}\label{pair:reduced}
	\biggl(\biggl[\begin{array}{cc}A & P_1\\[2pt]0 & S_{11}\end{array}\biggr],\; [\,C\ \ Q_1\,] \biggr)
\end{equation}
is approximately observable in infinite time.
\end{proposition}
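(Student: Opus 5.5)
Let $\mathcal{U}\subset X$ denote the unobservable subspace of $(\mathcal{A}^p,C^p)$, i.e. the set of $x^p\in X$ with $C^pT^p(t)x^p=0$ for all $t\ge0$ (interpreted on $D(\mathcal{A}^p)$ and extended by continuity, since $C^p$ is bounded here). The plan is to show that $\mathcal{U}$ is a nonzero closed $T^p$-invariant subspace, that it is the graph of a bounded map from a subspace $W_2\subset W$ into $Z$, and then to straighten this graph by a shear transformation $\mathcal{R}$.

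\textbf{Step 1 ($\mathcal{U}$ is a graph over $W$).} $\mathcal{U}$ is closed, $T^p(t)$-invariant, and nontrivial by the hypothesis that $(\mathcal{A}^p,C^p)$ is not approximately observable. Let $\Pi_W:X\to W$ be the projection onto the $w$-component. If $\sm{f\\0}\in\mathcal{U}$, then the upper triangular structure gives $T^p(t)\sm{f\\0}=\sm{T(t)f\\0}$. Hence $CT(t)f=0$ for all $t$, and approximate observability of $(A,C)$ forces $f=0$. Consequently $\Pi_W|_{\mathcal{U}}$ is injective. Set $W_2:=\Pi_W\mathcal{U}$, which is finite-dimensional and nonzero. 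There is then a linear map $M:W_2\to Z$ with $\mathcal{U}=\{\sm{Mw_2\\w_2}:w_2\in W_2\}$, and $M$ is bounded automatically because $W_2$ is finite-dimensional. Choose a complement $W_1$, so that $W=W_1\oplus W_2$.

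\textbf{Step 2 (invariance of $W_2$ under $S$).} Invariance of $\mathcal{U}$ under $T^p$, together with $\Pi_W T^p(t)=e^{St}\Pi_W$, gives $e^{St}W_2\subset W_2$, hence $SW_2\subset W_2$. In the decomposition $W=W_1\oplus W_2$ (ordered as $(w_1,w_2)$), this means $S$ maps $W_2$ into $W_2$, which is exactly the zero $(1,2)$ block in $\widetilde S=\sm{S_{11}&0\\S_{21}&S_{22}}$.

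\textbf{Step 3 (the shear and the new blocks).} Define $\mathcal{R}\sm{f\\w}=\sm{f-M\Pi_2w\\w}$, where $\Pi_2$ is the projection onto $W_2$ along $W_1$. This map is bounded with bounded inverse $\sm{f\\w}\mapsto\sm{f+M\Pi_2w\\w}$, and it sends $\mathcal{U}$ onto $\{0\}\times W_2$. Since $\mathcal{R}$ leaves the $f$-component's dependence on $u$ unchanged, $\widetilde B^p=\mathcal{R}B^p=B^p$. It also preserves the block-triangular form with $A$ in the $(1,1)$ block, so $\widetilde{\mathcal{A}}^p=\sm{A&\widetilde P\\0&S}$ for some $\widetilde P$. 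Because $\{0\}\times W_2$ is invariant under the transformed semigroup, the $f$-component stays zero for initial data in $\{0\}\times W_2$, which forces $\widetilde P|_{W_2}=0$. Likewise $\widetilde C^p=C^p\mathcal{R}^{-1}=[C\ \ \widetilde Q]$ vanishes on $\{0\}\times W_2$ because that set is contained in the unobservable subspace, so $\widetilde Q|_{W_2}=0$. These give $\widetilde P=(\tilde P_1\ 0)$ and $\widetilde Q=(Q_1\ 0)$. The equality $\widetilde{\mathcal{A}}^p=\mathcal{R}\mathcal{A}^p\mathcal{R}^{-1}$ must be checked on domains in $Z_{-1}$. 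I expect this to be the main technical obstacle. The key point is that $M w_2\in Z$ satisfies $AMw_2+Pw_2-MSw_2=0$ in $Z_{-1}$, which follows from differentiating the invariant graph, so $\mathcal{R}$ maps $D(\mathcal{A}^p)$ onto $D(\widetilde{\mathcal{A}}^p)$.

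\textbf{Step 4 (observability of the reduced pair).} Let $(f,w_1)$ be unobservable for the reduced pair \eqref{pair:reduced}. Then $(f,w_1,0)$ is unobservable for $\widetilde{\mathcal{A}}^p$, since the $w_2$ component never feeds back into $(f,w_1)$ or into the output. Therefore $\mathcal{R}^{-1}(f,w_1,0)\in\mathcal{U}$, and so $(f,w_1,0)\in\{0\}\times W_2$. This forces $f=0$ and $w_1=0$.
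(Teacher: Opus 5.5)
Your proposal is correct and follows essentially the same route as the paper: the unobservable subspace is shown to be a graph $\{(Lw_2,w_2)\}$ over $W_2=\pi_W(\mathcal{N})$ using approximate observability of $(A,C)$, the shear $(z,w_1,w_2)\mapsto(z-Lw_2,w_1,w_2)$ removes the $W_2$-columns of $\widetilde P$ and $\widetilde Q$, and reduced observability follows by embedding $(f,w_1)\mapsto(f,w_1,0)$. The paper obtains the zero blocks by explicit computation with $ALw_2+P_2w_2=LS_{22}w_2$ and $CL+Q_2=0$, whereas you argue from invariance of $\{0\}\times W_2$ and you also justify the $S$-invariance of $W_2$ that the paper only asserts; note that your Step 4 is valid for the $(f,w_1)$-corner of $\widetilde{\mathcal{A}}^p$, whose off-diagonal entry is $\tilde P_1=P_1-LS_{21}$, so the reduced pair must be read with $\tilde P_1$ (the paper's final step writes the original $P_1$ there, which is only right when $S_{21}=0$).
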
 
 The proof of Proposition \ref{prop:operator-version} is in the Appendix. 

\begin{proposition}\label{propo:SGobservability}
 Under Assumption \ref{assump:ApCp},  let there exist bounded operators $\Pi\in L(W,Z)$ and $\Gamma\in L(W,U) $ such that
\begin{equation}
\label{eq:regulator} 
\Pi S = A \Pi + B \Gamma + P, 
\qquad 0 = C \Pi + Q.
\end{equation}
 Then the pair $(S, \Gamma)$ is observable.
\end{proposition}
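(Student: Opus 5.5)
Argue by contradiction, using the Hautus test for the finite-dimensional pair $(S,\Gamma)$. If $(S,\Gamma)$ is not observable, there exist $\lambda\in\sigma(S)$ and $0\neq v\in W=\mathbb{C}^{\n}$ with $Sv=\lambda v$ and $\Gamma v=0$. From $v$ we will build a nonzero eigenvector of $\mathcal{A}^p$ annihilated by $C^p$. Such a vector produces a nonzero state whose output vanishes identically, contradicting Assumption~\ref{assump:ApCp}.

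The candidate is $x_0:=\sm{\Pi v\\ -v}\in X$, which is nonzero because $v\neq 0$. Applying the first regulator equation in \eqref{eq:regulator} to $v$ and using $\Gamma v=0$ gives
\[
\lambda\Pi v=\Pi S v = A\Pi v + P v .
\]
Hence $A\Pi v - Pv = \lambda\Pi v - 2Pv$, and this is not obviously in $Z$. So the sign choice needs care. Take instead $x_0:=\sm{-\Pi v\\ v}$. Then the first component of $\mathcal{A}^p x_0$ is $-A\Pi v+Pv$. We do not need each term in $Z$ separately, only the sum. From the display, $A\Pi v=\lambda \Pi v - Pv$ in $Z_{-1}$, so $-A\Pi v + Pv = -\lambda\Pi v+2Pv$. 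This still does not yield the eigen relation, so the sign must be $x_0=\sm{\Pi v\\ v}$ with the relation read as $A\Pi v+Pv=\lambda\Pi v\in Z$. This holds exactly by the display, since $\Pi v\in Z$. Thus $x_0\in D(\mathcal{A}^p)$ and
\[
\mathcal{A}^p x_0=\sm{A\Pi v+Pv\\ Sv}=\lambda x_0 .
\]
The output relation in \eqref{eq:regulator} gives $C^p x_0=C\Pi v+Qv=0$.

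Since $x_0$ is an eigenvector in $D(\mathcal{A}^p)$, we have $T^p(t)x_0=e^{\lambda t}x_0$, and therefore $C^pT^p(t)x_0=e^{\lambda t}C^px_0=0$ for all $t\ge 0$. Because $x_0\neq 0$, this contradicts approximate observability in infinite time, which requires the output map to be injective on $X$.

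The main obstacle is the domain and extrapolation-space bookkeeping. The identity $\Pi S=A\Pi+B\Gamma+P$ holds in $Z_{-1}$, and we must check that $x_0\in D(\mathcal{A}^p)$ so that the semigroup acts on it as $e^{\lambda t}$. Two facts settle this: $B\Gamma v=0$, and $A\Pi v+Pv=\lambda\Pi v$ lies in $Z$. A second point is that $\lambda$ and $v$ may be complex. We therefore work on the complexified spaces, where $W=\mathbb{C}^{\n}$ is already complex. Alternatively, note that the real and imaginary parts of $C^pT^p(t)x_0$ both vanish, so the real and imaginary parts of $x_0$, at least one of which is nonzero, are unobservable real states.
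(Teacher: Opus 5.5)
Your proof is correct and follows the paper's argument: from a Hautus vector $v$ with $Sv=\lambda v$ and $\Gamma v=0$, the regulator equations give $A\Pi v+Pv=\lambda\Pi v\in Z$, so $\sm{\Pi v\\ v}\in D(\mathcal{A}^p)$ is an eigenvector of $\mathcal{A}^p$ lying in $\ker C^p$, which contradicts Assumption~\ref{assump:ApCp}. The identity $T^p(t)x_0=e^{\lambda t}x_0$ and the real/imaginary-part remark usefully make explicit what the paper leaves implicit, but the abandoned trial vectors $\sm{\Pi v\\ -v}$ and $\sm{-\Pi v\\ v}$ are a pure detour and should be cut.
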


The proof is in the Appendix. The observability of the pair $(S,\Gamma)$ plays a crucial role in our finite-dimensional controller design (see Lemma \ref{lem:observability}, Sec. \ref{sec:obser}). As demonstrated there, this property is generically satisfied.

\subsection{Modal decomposition}

It is known that the eigenfunctions of the Sturm-Liouville operator $-A$ in \eqref{operator:SturmL} form an orthogonal basis in $L^2(0,1)$ (see, for instance, \cite[Theorem 1.3.2]{marchenko2011sturm} and \cite[Section 8.2]{egorov1998foundations}). 
 Moreover, these eigenvalues are distinct and can be ordered as $\lambda_n < \lambda_{n+1}$, satisfying the bounds:
\begin{equation}\label{growthCondition}
    \pi^2(n-1)^2 p_m - q_M \leqslant \lambda_n \leqslant \pi^2 n^2 p_M - q_m, 
    \end{equation}
  where we have used notation from \eqref{coeffi:pq}.
The corresponding normalized eigenfunctions $\phi_n(x)$ are uniformly bounded with respect to both the spatial variable $x \in [0, 1]$ and the mode index $n$ (see \cite{Orlov17}).

Consequently, the solution to \eqref{eq:heat} admits the following spectral decomposition:
\begin{equation} \label{eq:3.3}
    z(x,t) = \sum_{n=1}^\infty z_n(t)\phi_n(x), \quad z_n(t) = \langle z(\cdot,t), \phi_n \rangle,
\end{equation}
where $z_n(t)$ represents the modal coefficient corresponding to the eigenvalue $\lambda_n$.
Differentiating \( z_n(t) \) and using integration by parts yields:
\begin{equation}
  \begin{aligned} \label{eq:3.4}
  	& \dot{z}_n(t) = -\lambda_n z_n(t) + b_n (u(t)-\Gamma w(t)), \quad n\geqslant 1, 
  	\\ 
  	& \dot{w}(t)=S w(t),     
  \end{aligned}  
\end{equation}
with the tracking error $e(t)=\sum_{n=1}^{\infty} c_n z_n(t)$.
Here $b_n = p(1) \phi_n(1)$, $c_n=\int_0^1 c(x)\phi_n(x) \dd x$, and $ \lambda_n$ is as in \eqref{growthCondition}. Clearly, $ \{c_n\}\in  \ell^2$ and $b_n$ is uniformly bounded, i.e., there exist $b_M$  such that $|b_n|\leqslant b_M $.  Based on \eqref{eq:heat2}, \eqref{eq:heat} and \eqref{eq:3.4}, the transfer function in \eqref{eq:transfer_func} can be represented as $\PPP(s)=\sum_{i=1}^\infty \frac{b_ic_i}{s+\lambda_i}$.

   We assume that $c_n\neq 0$ for $n\geqslant 1$. Moreover, it can be checked that   $b_n\neq 0$ for $n\geqslant 1$. 
	It is well known that the system operator $A$ in \eqref{eq:z_simp} generates a diagonal semigroup and $(sI-A)^{-1}$ is compact. Thus, $c_n\neq 0$ is equivalent to the fact that the pair $(A,C)$ is approximately observable in infinite time, see \cite[Section 6.9]{tucsnak2009observation}.

Given a desirable decay rate $\delta>0$, define $N_0$ such that  $\lambda_{N_0+1}>\delta$.
Let \( N\geqslant N_0+1 \) be the dimension of the finite dimensional controller that we will apply to the system \eqref{eq:heat2}.  
Denote 
\begin{equation}\label{eq:notations}
\begin{aligned}
    &z^N(t) = \operatorname{col}\{z_i(t)\}_{i=1}^{N}, \quad z^{N_0}(t) = \operatorname{col}\{z_i(t)\}_{i=1}^{N_0}, \\
    &z^{N-N_0}(t) = \operatorname{col}\{z_i(t)\}_{i=N_0+1}^{N}, \quad \hat{z}^N = \operatorname{col}\{\hat{z}_i(t)\}_{i=1}^N, \\
    &A^N = \operatorname{diag}\{-\lambda_n\}_{n=1}^N, \quad A^{N_0} = \operatorname{diag}\{-\lambda_n\}_{n=1}^{N_0}, \\
    &A^{N-N_0} = \operatorname{diag}\{-\lambda_n\}_{n=N_0+1}^{N}, \quad C^N = [c_1, \dots, c_N], \\
    &B^N = \operatorname{col}\{b_i\}_{i=1}^{N}, \quad B^{N_0} = \operatorname{col}\{b_i\}_{i=1}^{N_0}, \\
    &B^{N-N_0} = \operatorname{col}\{b_i\}_{i=N_0+1}^{N}.
\end{aligned}
\end{equation}
The truncated version of system \dref{eq:3.4} is rewritten as 
\begin{equation}\label{eq:ABGaS}
\resizebox{0.99\linewidth}{!}{$
\displaystyle 
   \frac{\dd}{\dd t} \mt{w(t) \\ z^{N_0}(t) \\ z^{N-N_0}(t) } = 
   \mt{S & 0 & 0 \\ -B^{N_0} \Gamma & A^{N_0} & 0 \\ -B^{N-N_0} \Gamma & 0 & A^{N-N_0} }
   \mt{w(t) \\ z^{N_0}(t) \\ z^{N-N_0}(t) } + 
   \mt{0 \\ B^{N_0} \\ B^{N-N_0} } u(t)
$}\end{equation}

The other modal coefficients satisfy,  for all $n>N$, 
   \malign{\label{eq:tail}
  & \dot{z}_n(t) = -\lambda_n z_n(t) + b_n (u(t)-\Gamma w(t)).}

\subsection{Observability of the truncated system.}\label{sec:obser}
 
  As we shall design a finite dimensional observer-based controller, now consider the observability of the following pair
 \begin{equation}\label{pair:ABGaS}
    (\bar{A}, \bar{C})=\left(\mt{S & 0 \\ -B^N \Gamma & A^N}, \mt{0 & C^{N}} \right).
 \end{equation}
 
\begin{assumption}\label{assum:PN}
    	System $(A^N, B^N, C^N)$ has no transmission zeros at $s\in \sigma(S)\bigcap \rho(A^N)$, i.e.,
        \begin{equation}\label{transf:GN}
         G^N(s)= \sum_{i=1}^{N} \frac{b_i c_i}{s+\lambda_i} \neq 0 
        \end{equation}
        {for all} $s\in \sigma(S)\bigcap \rho(A^N).$
\end{assumption} 
Under Assumption \ref{assump:1}, this assumption always holds for sufficiently large  $N$  since $\sum_{i=1}^{N} \frac{b_i c_i}{s+\lambda_i}$ converges pointwise to $\PPP(s)$ for all $s\in \sigma(s)\bigcap \rho(A)$.

\begin{lemma}\label{lem:observability}
The pair in \eqref{pair:ABGaS} is observable {if and only if} the following two conditions hold:
\begin{enumerate}    
    \item Assumption \ref{assum:PN} holds. 
    \item The pair $(S, \Gamma)$ is observable.
\end{enumerate}
\end{lemma}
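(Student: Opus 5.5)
The plan is to apply the Popov--Belevitch--Hautus (PBH) test to the pair \eqref{pair:ABGaS}. We work over $\mathbb{C}$, since $\sigma(S)$ may contain complex eigenvalues. The pair $(\bar A,\bar C)$ is observable if and only if, for every $s\in\mathbb{C}$, the only solution $(v,\xi)\in\mathbb{C}^{\n}\times\mathbb{C}^N$ of $\bar A\,\mathrm{col}\{v,\xi\}=s\,\mathrm{col}\{v,\xi\}$ with $\bar C\,\mathrm{col}\{v,\xi\}=0$ is the zero vector. Written out, these equations read
\begin{equation*}
(sI-S)v=0,\qquad (sI-A^N)\xi=-B^N\Gamma v,\qquad C^N\xi=0 .
\end{equation*}
We will repeatedly use three facts stated above: the $\lambda_n$ are distinct, $b_n\neq 0$, and $c_n\neq 0$ for all $n$. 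We then split into cases according to where $s$ lies.

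\emph{Case $s\notin\sigma(S)$.} Here $v=0$, so $\xi$ is an eigenvector of the diagonal matrix $A^N$ with $C^N\xi=0$. Because the diagonal entries of $A^N$ are distinct, $\xi$ is a multiple of a single unit vector $e_k$. Since $c_k\neq0$, the condition $C^N\xi=0$ forces $\xi=0$. Hence only $s\in\sigma(S)$ needs attention, and in that case $v$ is either $0$ or an eigenvector of $S$.

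\emph{Case $s\in\sigma(S)$ with $s=-\lambda_k$ for some $k\leqslant N$.} The $k$-th row of the second equation gives $0=-b_k\Gamma v$, so $\Gamma v=0$ because $b_k\neq0$. Observability of $(S,\Gamma)$ then gives $v=0$, and we are back in the previous argument, which yields $\xi=0$. Note that Assumption \ref{assum:PN} is not needed here, which is consistent with its formulation on $\sigma(S)\cap\rho(A^N)$ only.

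\emph{Case $s\in\sigma(S)\cap\rho(A^N)$.} Here $\xi=-(sI-A^N)^{-1}B^N\Gamma v$, and therefore
\begin{equation*}
C^N\xi=-G^N(s)\,\Gamma v .
\end{equation*}
Under Assumption \ref{assum:PN} we have $G^N(s)\neq0$, so $\Gamma v=0$. Observability of $(S,\Gamma)$ then gives $v=0$, hence $\xi=0$. This completes sufficiency.

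For necessity we construct nonzero kernel vectors of the PBH matrix. First suppose $(S,\Gamma)$ is not observable. Then there is an eigenvector $v$ of $S$, with eigenvalue $s$, such that $\Gamma v=0$, and $(v,0)$ is a nonzero kernel vector. Next suppose $(S,\Gamma)$ is observable but $G^N(s)=0$ for some $s\in\sigma(S)\cap\rho(A^N)$. Take any eigenvector $v$ of $S$ for $s$; observability gives $\Gamma v\neq0$. Set $\xi=-(sI-A^N)^{-1}B^N\Gamma v$. Then the first two equations hold by construction, and $C^N\xi=-G^N(s)\Gamma v=0$, so $(v,\xi)\neq0$ violates the PBH condition.

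The main point needing care is the resonant case $s\in\sigma(S)\cap\sigma(A^N)$, where the resolvent formula fails. It is handled by the row-wise argument above using $b_k\neq0$. The remaining steps are routine linear algebra.
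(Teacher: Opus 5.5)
Your proposal is correct and follows essentially the same route as the paper. You apply the PBH test, split by whether $s\in\sigma(S)$ and whether $s\in\sigma(A^N)$ (using $b_k\neq 0$ in the resonant case, $G^N(s)\neq 0$ otherwise, and then $c_k\neq 0$ with the distinct $\lambda_n$ to kill $\xi$), and build the same two kernel vectors $(v,0)$ and $\bigl(v,-(sI-A^N)^{-1}B^N\Gamma v\bigr)$ for necessity.
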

\begin{proof}
By the Popov-Belevitch-Hautus (PBH) test, the pair $(\bar{A}, \bar{C})$ is observable if and only if the matrix $[\bar{A}^T - \lambda I, \bar{C}^T]^T$ has full column rank for all $\lambda \in \mathbb{C}$. This is equivalent to showing that the only vector  $ v = \col\{\psi, \xi\}$ 
satisfying the following equations is the zero vector:
\begin{align}
    -(\lambda_i + \lambda)\xi_i - b_i \Gamma \psi &= 0, \quad i=1,\dots,N, \label{eq:pbh_x} \\
    (S - \lambda I)\psi &= 0, \label{eq:pbh_y} \\
    \sum_{i=1}^N c_i \xi_i &= 0. \label{eq:pbh_out}
\end{align}
\textbf{(Sufficiency)}: Suppose conditions 1 and 2 hold. We show that $\psi=0$ and $\xi=0$.
First, consider $\psi$. If $\lambda \notin \sigma(S)$, $S-\lambda I$ is nonsingular, implying $\psi=0$. For $\lambda \in \sigma(S)$, we claim $\Gamma \psi = 0$.
\begin{itemize}
    \item If $\lambda \notin \sigma(A^N)$, from \eqref{eq:pbh_x} we have $\xi = -(\lambda I - A^N)^{-1}B^N \Gamma \psi$. Substituting this into \eqref{eq:pbh_out} yields $-G^N(\lambda)\Gamma \psi = 0$. Since $G^N(\lambda) \neq 0$ by Condition 1, it follows that $\Gamma \psi = 0$.
    \item If $\lambda \in \sigma(A^N)$, let $\lambda = -\lambda_k$. From \eqref{eq:pbh_x}, $-b_k \Gamma \psi = 0$. Since $b_k \neq 0$, we have $\Gamma \psi = 0$.
\end{itemize}
Thus, the system reduces to $(S-\lambda I)\psi=0$ and $\Gamma \psi = 0$. Since $(S, \Gamma)$ is observable and $\sigma(S) \subset \mathbb{C}_+$, this implies $\psi=0$.
Substituting $\psi=0$ into \eqref{eq:pbh_x} yields $-(\lambda_i +\lambda)\xi_i = 0$. If $\lambda \notin \sigma(A^N)$, clearly $\xi=0$. If $\lambda = -\lambda_k$, then $\xi_i = 0$ for all $i \neq k$. Equation \eqref{eq:pbh_out} then becomes $c_k \xi_k = 0$, which implies $\xi_k = 0$ since $c_k \neq 0$. Hence, $\xi=0$.

\textbf{(Necessity)}: We show that if either condition fails, the pair is unobservable.
\begin{itemize}
    \item Case 1: Suppose Condition 2 fails, i.e., $(S, \Gamma)$ is not observable. There exists $\lambda \in \sigma(S)$ and $\psi \neq 0$ such that $S \psi = \lambda \psi$ and $\Gamma \psi = 0$. Choose $\xi=0$. Then \eqref{eq:pbh_x} holds since $0 - b_i \cdot 0 = 0$, \eqref{eq:pbh_y} holds by definition, and \eqref{eq:pbh_out} holds since $\xi=0$. Thus, $ v = \col\{\psi, 0\}\neq 0$ is an unobservable mode.
    
    \item Case 2: Suppose Condition 1 fails. There exists $\lambda \in \sigma(S) \bigcap \rho(A^N)$ such that $G^N(\lambda) = 0$. Let $\psi \neq 0$ be an eigenvector of $S$ associated with $\lambda$. Since $\lambda \notin \sigma(A^N)$, the matrix $(\lambda I - A^N)$ is invertible. Choose $\xi = -(\lambda I - A^N)^{-1} B^N \Gamma \psi$. 
    Then \eqref{eq:pbh_x} is satisfied by construction. For the output equation \eqref{eq:pbh_out}, we have:
    \[
    C^N \xi = -C^N (\lambda I - A^N)^{-1} B^N \Gamma \psi = -G^N(\lambda) \Gamma \psi.
    \]
    Since $G^N(\lambda) = 0$, it follows that $C^N \xi = 0$. Thus, $v = \col\{\psi, \xi\}$ is an unobservable mode (note that $\psi \neq 0$ implies $v \neq 0$). \qedopen
\end{itemize}
\end{proof}

\begin{remark} 
     Since the pair $(S, \Gamma)$ must be observable, 
     it constitutes a minimal realization that can generate the signal $\Gamma w(t)$ in \eqref{eq:3.4}. Given that the spectrum of $S$ is known, its characteristic polynomial is uniquely determined. Consequently, one can always construct an equivalent and known pair $(S_{new}, \Gamma_{new})$ (e.g., in the observable canonical form) that can generate the same signal $\Gamma w(t)$. Therefore, for notational simplicity and without loss of generality, we assume that $(S, \Gamma)$ is known.
\end{remark}
\subsection{Observer-based controller construction} \label{sec:obser_constr}
Consistent with the problem formulation, we assume that the tracking error $e(t)$ is the only available measurement. Consider the truncated model \eqref{eq:ABGaS}.
Define $N_1=N_0+\n$ and $N_2=N-N_0$. Re-segment the pair \eqref{pair:ABGaS} with respect to the coordinates $\rr^{N_1}\oplus \rr^{N_2}$:
\begin{equation}\label{pair:lemmaGain}
	\bar{A}= \begin{bmatrix} A_S & 0 \\ \mathcal{B} & A^{N-N_0} \end{bmatrix}, \quad 
    \bar{C}= \begin{bmatrix} \mathcal{C}_1 & C^{N-N_0} \end{bmatrix},
\end{equation}
where 
\begin{equation}\label{def:AS_A2_B}
    \begin{aligned}
    & {A}_S = \mt{S & 0 \\ -B^{N_0}\Gamma & A^{N_0}},  \ \mathcal{B}   = \mt{-B^{N-N_0}\Gamma & 0_{N_2\times N_0} }, \ \ 
     \\ 
    & \mathcal{C}_1 = [0_{1\times n_w} \;\; C^{N_0}], \ \ \ C^{N-N_0} = [c_{N_0+1}, \dots, c_N], \ \\
    & C^{N_0} = [c_1, \dots, c_{N_0}].
    \end{aligned}
\end{equation}
Here   $A_S\in \rr^{N_1 \times N_1}$, ${A^{N-N_0}}\in \rr^{N_2\times N_2}$, $\mathcal{B}\in \rr^{N_2\times N_1}$, $B^{N-N_0}\Gamma \in \rr^{N_2\times \n }$, $\mathcal{C}_1\in \rr^{1 \times N_1}$, and $C^{N-N_0}\in \rr^{1 \times N_2}$. 

We construct the finite-dimensional observer for \eqref{eq:heat} as follows:
{ 
\begin{equation} \label{obsererfinite}
\begin{aligned}
    \hat{z}(x,t) &:= \sum_{n=1}^N \hat{z}_n(t)\phi_n(x), \\
    \frac{\dd}{\dd t} \mt{ \hat{w}(t) \\ \hat{z}^N(t) } &= \mt{S & 0 \\ -B^N\Gamma & A^N } \mt{\hat{w}(t) \\ \hat{z}^N(t)} + \mt{0 \\ B^N } u(t) \\
    &\quad + \mt{L^w \\ L^N } \left( \mt{0 & C^N} \mt{\hat{w}(t) \\ \hat{z}^N(t)} - e(t) \right).
\end{aligned}
\end{equation}
}
 where  
 $\sm{L^w \\ L^N} $ is the \textit{observer gain} designed by using the following observer design algorithm(\textbf{ODA}): \\
\begin{openbox}
\textbf{Step 1.} 
Find the solution $\mathcal{K}\in \mathbb{R}^{N_2\times N_1}$ to the Sylvester equation
\begin{equation}\label{eq:sylvester}
	{A^{N-N_0}} \mathcal{K} - \mathcal{K} A_S = -\mathcal{B}.
\end{equation}
\textbf{Step 2.}
Since  $(A_S, (\mathcal{C}_1+C^{N-N_0}\mathcal{K}))$ is observable (see Lemma \ref{lemma:invarience-gain} below), we
choose $\tilde{\mathbb{L}}_1$ by employing Ackermann's formula (e.g., \cite[Chapter 10.2]{ogata2010modern}) such that  $A_S +\tilde{\mathbb{L}}_1 (\mathcal{C}_1 + C^{N-N_0} \mathcal{K})+\delta I \in \mathbb{R}^{N_1\times N_1}$ is Hurwitz. \\
\textbf{Step 3.} Obtain the observer gain as
\begin{equation}\label{gain:obsv_ori}
	\col\{L^w , L^N \}=\col \{ \tilde{\mathbb{L}}_1, \mathcal{K} \tilde{\mathbb{L}}_1\},
	\end{equation}
    where $L^w\in \mathbb{R}^{n_w}, L^N\in\mathbb{R}^{N},$ and $\tilde{\mathbb{L}}_1\in \mathbb{R}^{N_1}. $
\end{openbox}

The existence of a unique solution to \eqref{eq:sylvester} is guaranteed by $\sigma(A_S) \cap \sigma(A^{N-N_0}) = \emptyset$ (see, for instance, \cite{Sylvester1991}). The motivation behind this design is to keep the observer gain norm uniformly bounded in $N$. 

We choose the \textit{control law} 
\begin{equation}\label{controlLaw}
    u(t)=\mt{\Gamma & K_0} \sm{\hat{w}(t) \\ \hat{z}^{N_0}(t)},
\end{equation}
where $K_0$ is chosen such that, for some $0<P_{K_0} \in \rr^{N_0\times N_0} $,
\begin{equation*}
	(A^{N_0} + B^{N_0}K_0)^\top P_{K_0}+P_{K_0}(A^{N_0} + B^{N_0}K_0 )<-2\delta P_{K_0}.
\end{equation*}
Define the estimation errors as $\tilde{z}_n(t)=\hat{z}_n(t)-z_n(t)$ and $\tilde{w}(t)=\hat{w}(t)-w(t)$. Denote
\begin{equation}\label{eq:def_X0}
\begin{aligned}
    \tilde{z}^N(t) &= \col\{\tilde{z}_1(t), \dots, \tilde{z}_N(t)\}, \\
    X_0(t) &= \col\{ \hat{z}^N(t), \tilde{w}(t), \tilde{z}^N(t)\}. 
\end{aligned}
\end{equation}      
 Based on \eqref{eq:ABGaS}, \eqref{obsererfinite}, \eqref{controlLaw}, and \eqref{eq:def_X0},   the \textit{closed-loop system} is summarized as:
 \begin{equation}\label{closedMod}
 	\begin{aligned}
 		 & \dot{z}_n(t)=-\lambda_n  z_n(t) +b_n \bar{K} X_0(t), \quad n\geqslant N+1, \\  
         & \dot{X}_0(t)=F X_0(t) + \mathcal{L} \zeta (t),      
 	\end{aligned}
 \end{equation}
where $\bar{K}\in \rr^{1\times (2N+\n)}, \mathcal{L}\in \rr^{(2N+\n)\times 1}, A_K\in \rr^{N\times N},$\\$ A_L\in \rr^{(N+\n)\times (N+\n)}$,  and
\begin{equation} \label{eq:system_matrices}
\begin{aligned}
    &\mathcal{L} = \mathrm{col}\{ -L^N , -L^w, -L^N \}, \quad 
    \zeta(t) = \sum_{n=N+1}^{\infty} c_n z_n(t),
     \\
     &F=\left[
    \begin{array}{c|cc}
    A_K
    & 0 & L^N C^N \\
    \hline
    0 & \multicolumn{2}{c}{A_L} \end{array}\right],\ A_L = \mt{S & L^w C^N \\ -B^N \Gamma & A^N + L^N C^N}, \\
    &   A_K=\mt{
A^{N_0}+B^{N_0}K_0 & \ 0 \\[1mm]
B^{N-N_0} K_0 & {A^{N-N_0}} 
},\ \ \bar{K} = \mt{K & \Gamma & 0_{1\times N} }, \\
   &    K=[K_0\ \  0_{1\times N_2}].
   \end{aligned}
\end{equation}

Consider the closed-loop system \eqref{eq:heat}, \eqref{obsererfinite} and \eqref{controlLaw}. Following arguments in \cite{katz2022delayed}, for all initial conditions $z(\cdot, 0) \in \mathcal{H}^1(0,1)$, the closed-loop system has a unique solution satisfying $z \in C([0, \infty); L^2(0,1))\cap C^1((0, \infty); L^2(0,1)) $.

\section{Stability analysis} 

\begin{lemma}\label{propo:Knorm}
Let $A_S, {A^{N-N_0}}$, and $\mathcal{B}$ be defined as in \eqref{def:AS_A2_B}.  
Then, the solution $\mathcal{K}$ of the Sylvester equation
\eqref{eq:sylvester}
satisfies:
$
\| \mathcal{K} \| \leqslant M_0,
$
where $M_0 > 0$ is independent of the dimension $N$.
Moreover, as $N \to \infty$, the sequence of the row norms of $\mathcal{K}$ is in $\ell^1$.
\end{lemma}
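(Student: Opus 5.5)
Since $A^{N-N_0}=\diag\{-\lambda_n\}_{n=N_0+1}^{N}$ is diagonal, the Sylvester equation \eqref{eq:sylvester} decouples row by row. Write $\mathcal{K}=\col\{k_n\}_{n=N_0+1}^{N}$ with row vectors $k_n\in\rr^{1\times N_1}$. Then \eqref{eq:sylvester} becomes
\[
k_n(\lambda_n I + A_S) = \mt{-b_n\Gamma & 0_{1\times N_0}}, \qquad n=N_0+1,\dots,N .
\]
Because $\sigma(A_S)=\sigma(S)\cup\{-\lambda_1,\dots,-\lambda_{N_0}\}$ and $\lambda_n>\delta>0$ for $n\geqslant N_0+1$, each matrix $\lambda_n I+A_S$ is invertible. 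Hence
\[
k_n = \mt{-b_n\Gamma & 0}(\lambda_n I + A_S)^{-1}.
\]
The important point is that $A_S$ is a fixed matrix: it depends only on $S$, $\Gamma$ and the first $N_0$ modes, and not on $N$. Each row $k_n$ therefore depends only on $n$, never on $N$. The matrix for larger $N$ simply appends rows to the one for smaller $N$, so it suffices to prove $\sum_{n>N_0}\|k_n\|<\infty$. This gives the $\ell^1$ statement directly, and the uniform bound follows from $\|\mathcal{K}\|\leqslant\|\mathcal{K}\|_F\leqslant\sum_n\|k_n\|=:M_0$.

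\textbf{Bounding the resolvent.} The main step is to bound $\|(\lambda_n I+A_S)^{-1}\|$ by $c/\lambda_n$ for large $n$. The approach is the Neumann series
\[
(\lambda_n I + A_S)^{-1}=\lambda_n^{-1}\bigl(I+\lambda_n^{-1}A_S\bigr)^{-1}.
\]
For $\lambda_n\geqslant 2\|A_S\|$ this gives $\|(\lambda_n I+A_S)^{-1}\|\leqslant 2/\lambda_n$. By \eqref{growthCondition}, $\lambda_n\to\infty$, so there are only finitely many indices $n>N_0$ with $\lambda_n<2\|A_S\|$. For these, $\lambda_n I+A_S$ is still invertible, and they contribute a finite constant independent of $N$. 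For all remaining indices,
\[
\|k_n\|\leqslant \frac{2 b_M\|\Gamma\|}{\lambda_n}.
\]
Using the lower bound $\lambda_n\geqslant \pi^2(n-1)^2p_m-q_M$ from \eqref{growthCondition}, this is $O(1/n^2)$ and hence summable.

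\textbf{Main obstacle.} The main subtlety is avoiding an $N$-dependent constant in the resolvent bound, for example through a non-normal $A_S$ with Jordan blocks from $S$. The Neumann-series argument sidesteps this because it uses only $\|A_S\|$, which is fixed. A second point to check is invertibility for every $n>N_0$. This requires $-\lambda_n\notin\sigma(S)$, which holds since $\sigma(S)\subset\mathbb{C}_+$, and $\lambda_n\neq\lambda_j$ for $j\leqslant N_0$, which holds since the eigenvalues are distinct. Combining the finitely many exceptional rows with the summable tail gives $M_0$ independent of $N$ together with the $\ell^1$ property of the row norms.
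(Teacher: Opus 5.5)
Your proof is correct and follows the paper's argument: decouple the Sylvester equation row by row, bound the resolvent by $\|(\lambda_n I + A_S)^{-1}\|\leqslant 1/(\lambda_n-\|A_S\|)\leqslant 2/\lambda_n$ for large $n$ (you use a Neumann series, the paper a singular-value perturbation inequality), and use the quadratic growth of $\lambda_n$ to get $O(1/n^2)$ row norms. Your explicit remark that each row $k_n$ depends only on $n$, not on $N$, makes the handling of the finitely many initial rows cleaner than in the paper.
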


\begin{proof}
Since ${A^{N-N_0}}$ is a diagonal matrix, the Sylvester equation \eqref{eq:sylvester} can be decoupled into independent linear equations for the rows of $\mathcal{K}$. Let $k_i^T$ and $v_i^T$ denote the $i$-th rows of $\mathcal{K}$ and $\mathcal{B}$, respectively, for $i = 1, \dots, N-N_0$. Denote $\bar{\lambda}_i=\lambda_{i+N_0}.$ Substituting the diagonal structure of ${A^{N-N_0}}$ into \eqref{eq:sylvester} yields
\[
-\bar{\lambda}_i k_i^T - k_i^T A_S = -v_i^T,
\]
which, upon transposition, becomes
\begin{equation}\label{eq:row_system}
    (A_S^T + \bar{\lambda}_i I_{N_1}) k_i = v_i,
\end{equation}
where $I_{N_1}$ is the identity matrix of dimension $N_1$.

The spectral separation condition
\[
\max\{\lambda:\lambda\in \sigma({A^{N-N_0}}) \} < \min \{\mathrm{Re}\;\lambda:\,\lambda\in \sigma(A_S) \}
\]
 implies that $-\bar{\lambda}_i \notin \sigma(A_S)$ for all $i$, ensuring the invertibility of the matrix coefficient in \eqref{eq:row_system}. Consequently,  $k_i = (A_S^T + \bar{\lambda}_i I_{N_1})^{-1} v_i$. To estimate the norm of $k_i$, we explicitly bound the spectral norm of  $(A_S^T + \bar{\lambda}_i I_{N_1})^{-1}$. Recall that for any nonsingular matrix $M$, $\|M^{-1}\| = 1/\sigma_{\min}(M)$, where $\sigma_{\min}(\cdot)$ denotes the smallest singular value. Applying the inequality $\sigma_{\min}(X + Y) \geqslant \sigma_{\min}(X) - \|Y\|$ with $X = \bar{\lambda}_i I$ and $Y = A_S^T$, we obtain
\[
\sigma_{\min}(A_S^T + \bar{\lambda}_i I_{N_1}) \geqslant \bar{\lambda}_i - \|A_S^T\|.
\]
By the growth condition \eqref{growthCondition}, $\bar{\lambda}_i$ increases quadratically with $i$. Consequently, for sufficiently large $i$, the term $\bar{\lambda}_i$ dominates the constant $\|A_S^T\|$. Specifically, there exists a threshold $i^*$ such that for all $i > i^*$, we have $\bar{\lambda}_i > 2\|A_S^T\|$, implying $\bar{\lambda}_i - \|A_S^T\| > \bar{\lambda}_i / 2$. Therefore,
\begin{equation}\label{eq:resolvent_bound}
    \|(A_S^T + \bar{\lambda}_i I_{N_1})^{-1}\| \leqslant \frac{1}{\bar{\lambda}_i - \|A_S^T\|} \leqslant \frac{2}{\bar{\lambda}_i} \leqslant \frac{C_{\text{res}}}{i^2},
\end{equation}
for some constant $C_{\text{res}} > 0$. 

It is easy to see that there exists a constant $b>0$ such that $|(\mathcal{B})_{ij}| \leqslant b$ for all $i, j$.
Regarding the vector $v_i$, the element-wise bound $|(\mathcal{B})_{ij}| \leqslant b$ implies
\begin{equation}\label{eq:input_bound}
    \|v_i\| = \sqrt{\sum_{j=1}^{N_1} |(\mathcal{B})_{ij}|^2} \leqslant b\sqrt{N_1} \triangleq C_v,
\end{equation}
where $C_v$ is independent of the dimension $N$, as $N\to \infty$.

Combining \eqref{eq:resolvent_bound} and \eqref{eq:input_bound}, the norm of the $i$-th row of $\mathcal{K}$ satisfies
\[
\|k_i\| \leqslant \|(A_S^T + \bar{\lambda}_i I_{N_1})^{-1}\| \|v_i\| \leqslant \frac{C_{\text{res}} C_v}{i^2}, \quad \text{for } i > i^*.
\]
It follows that the sequence of the row norms of $\mathcal{K}$ is in $\ell^1$  as $N \to \infty$.

Finally, we establish the uniform boundedness of $\|\mathcal{K}\|$. 
Note that
$
\|\mathcal{K}\|^2 \leqslant \|\mathcal{K}\|_F^2 = \sum_{i=N_0+1}^{N} \|k_i\|^2.
$
Splitting the summation at $i^*$, the sum of the initial terms of the series is finite. The tail of the series is bounded since the series
$
\sum_{i=i^*+1}^{N} \left( \frac{C_{\text{res}} C_v}{i^2} \right)^2 
$ converges.
 Thus, $\|\mathcal{K}\|$ is bounded uniformly by a constant $M_0$ independent of $N$. 
 \qedopen
\end{proof}
To analyze the stability of the closed-loop system, we will use the Lyapunov method, where the uniform boundedness of the solution of a related Lyapunov equation plays an important role. We need the following lemma: 
\begin{lemma}\label{proposi:F2}
Let
$\bar{F}_\delta = \begin{bmatrix} F_0 & 0 \\ F_2 & {A^{N-N_0}} \end{bmatrix}+\delta I,$
where $F_0 \in \mathbb{R}^{N_1\times N_1}$,
$F_2 \in \mathbb{R}^{N_2 \times N_1 }$, and
${A^{N-N_0}}$ is defined in \eqref{eq:notations}.
Suppose there exist constants $M,\alpha, \beta>0$
such that
$
\|e^{(F_0+\delta I) t}\| \le M e^{-\alpha t}
$ and $|(F_2)_{ij}|\leqslant \beta $ for all entries of $F_2$.
Then there exist $M_1, \delta_1>0$, independent of $N$, such that
$
\|e^{\bar{F}_\delta t}\| \le M_1 e^{-\delta_1 t},
$ for all $t\geqslant 0$.
\end{lemma}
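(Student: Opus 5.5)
The plan is to exploit the block lower-triangular structure of $\bar{F}_\delta$ and write the matrix exponential explicitly. Set $D:=A^{N-N_0}+\delta I=\diag\{-(\lambda_n-\delta)\}_{n=N_0+1}^{N}$. Then
\[
e^{\bar{F}_\delta t}=\begin{bmatrix} e^{(F_0+\delta I)t} & 0\\ X(t) & e^{Dt}\end{bmatrix},\qquad X(t)=\int_0^t e^{D(t-s)}F_2\,e^{(F_0+\delta I)s}\dd s,
\]
which follows from the variation of constants formula, or by differentiating both sides. This gives $\|e^{\bar{F}_\delta t}\|\leqslant \|e^{(F_0+\delta I)t}\|+\|e^{Dt}\|+\|X(t)\|$, so I will bound the three blocks separately, with constants independent of $N$.

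For the two diagonal blocks, the first is bounded by $Me^{-\alpha t}$ by hypothesis. The second is diagonal with entries $e^{-(\lambda_n-\delta)t}$, $n\geqslant N_0+1$. Since $\lambda_{N_0+1}>\delta$ by the choice of $N_0$, we have $\|e^{Dt}\|\leqslant e^{-\gamma t}$ with $\gamma:=\lambda_{N_0+1}-\delta>0$, and $\gamma$ does not depend on $N$. I will fix
\[
\delta_1:=\tfrac12\min\{\alpha,\gamma\}.
\]

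The main step is the off-diagonal block $X(t)$, whose dimension $N_2\times N_1$ grows with $N$. As in the proof of Lemma \ref{propo:Knorm}, I will work row by row, using that $D$ is diagonal. Let $f_i^\top$ be the $i$-th row of $F_2$ and write $\bar\lambda_i=\lambda_{i+N_0}$. The $i$-th row of $X(t)$ is
\[
x_i^\top(t)=\int_0^t e^{-(\bar\lambda_i-\delta)(t-s)}f_i^\top e^{(F_0+\delta I)s}\dd s .
\]
Because $N_1=N_0+\n$ is fixed, the entrywise bound on $F_2$ gives $\|f_i\|\leqslant \beta\sqrt{N_1}=:C_f$, and $C_f$ is independent of $N$. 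Hence
\[
\|x_i(t)\|\leqslant C_fM\int_0^t e^{-(\bar\lambda_i-\delta)(t-s)}e^{-\alpha s}\dd s .
\]
Using $e^{-\alpha s}\leqslant e^{-\delta_1 s}$ and $\bar\lambda_i-\delta-\delta_1\geqslant\gamma/2$, this integral is at most
\[
e^{-\delta_1 t}\int_0^t e^{-(\bar\lambda_i-\delta-\delta_1)(t-s)}\dd s\leqslant \frac{e^{-\delta_1 t}}{\bar\lambda_i-\delta-\delta_1}.
\]

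The obstacle I expect is the summation over rows: a naive bound of $\|X(t)\|$ by $\sqrt{N_2}$ times the largest row norm would grow with $N$. I would handle it exactly as in Lemma \ref{propo:Knorm}, via the Frobenius norm and the quadratic growth \eqref{growthCondition}. By \eqref{growthCondition}, $\bar\lambda_i-\delta-\delta_1\geqslant c\,i^2$ for all $i$ beyond some threshold $i^*$ independent of $N$. For $i\leqslant i^*$ the denominator is simply bounded below by $\gamma/2$. Therefore
\[
\|X(t)\|^2\leqslant\|X(t)\|_F^2=\sum_i\|x_i(t)\|^2\leqslant (C_fM)^2e^{-2\delta_1 t}\Bigl(\tfrac{4i^*}{\gamma^2}+\sum_{i>i^*}\tfrac{1}{c^2i^4}\Bigr)=:M_X^2e^{-2\delta_1t},
\]
and $M_X$ is independent of $N$ because the series converges.

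Combining the three estimates, and using $e^{-\alpha t},e^{-\gamma t}\leqslant e^{-\delta_1 t}$, gives $\|e^{\bar{F}_\delta t}\|\leqslant (M+1+M_X)e^{-\delta_1 t}$. So the lemma holds with $M_1:=M+1+M_X$ and $\delta_1=\tfrac12\min\{\alpha,\gamma\}$, both independent of $N$.
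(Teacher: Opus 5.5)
Your proof is correct, but it takes a different route from the paper's.

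\textbf{The paper's route.} It uses the Lyapunov function $V=x_1^\top P_1x_1+p_0\|\xi\|^2$, where $P_1(F_0+\delta I)+(F_0+\delta I)^\top P_1=-I$. It absorbs the coupling term $2p_0\sum_n\xi_n(F_2)_nx_1$ by Young's inequality with weights $\mu_n=\lambda_n-\delta$, takes $p_0$ small using $\sum_n 1/\mu_n<\infty$, and concludes by the comparison principle.

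\textbf{Your route.} You write $e^{\bar F_\delta t}$ explicitly, using the same block-triangular convolution formula the paper later uses for $e^{F_\delta t}$ in the proof of Theorem \ref{thm:main}. You then deal with $\|F_2\|$ possibly growing like $\sqrt{N_2}$ by extracting the factor $1/(\bar\lambda_i-\delta-\delta_1)$ from each row and summing squares. That row-wise step is exactly what is needed here, since the crude bound used in Theorem \ref{thm:main} would fail.

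\textbf{What your route buys.}
\begin{enumerate}
\item It needs only $\sum_i(\bar\lambda_i-\delta-\delta_1)^{-2}<\infty$, which is weaker than the summability of $1/\mu_n$.
\item It gives explicit constants $\delta_1=\tfrac12\min\{\alpha,\gamma\}$ and $M_1=M+1+M_X$.
\item It uses only the stated hypotheses. The paper's step $p_1\|x_0\|^2\leqslant V$ tacitly requires $\lambda_{\min}(P_1)$ to stay away from zero uniformly in $N$. That essentially needs a uniform bound on $\|F_0\|$: for $F_0+\delta I=-kI$ one gets $P_1=I/(2k)\to 0$. This holds in the paper's applications, where $F_0$ is $A^{N_0}+B^{N_0}K_0$ or $A_S+\tilde{\mathbb{L}}_1\mathcal{C}_{N_1}$, but it is not among the lemma's assumptions.
\end{enumerate}

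\textbf{What the paper's route buys.} The Lyapunov proof is closer in spirit to the LMI-based analysis used in the rest of the paper.
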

\begin{proof}
Let $ x_0(t)=\col\{x_1(t), \xi(t) \}$ and define $\dot{x}_1(t) = A_1 x_1(t)$ and $\dot{\xi}(t) = F_2 x_1(t) + A_2 {\xi}(t)$, where $A_1 = F_0 + \delta I$ and $A_2 = \text{diag}\{-\mu_n\}_{n=N_0+1}^N$ with $\mu_n = \lambda_n - \delta > 0$. Since $\|e^{A_1 t}\| \le M e^{-\alpha t}$, there exists $P_1>0$, independent of $N$, such that $P_1 A_1 + A_1^T P_1 = -I_{N_1}$.

Consider the Lyapunov function $V(t) = x_1^T(t) P_1 x_1(t) + p_0 \|\xi(t)\|^2$ for some $p_0 > 0$. Differentiating $V$ along the trajectories, and using $P_1 A_1 + A_1^T P_1 = -I_{N_1}$ yields
\begin{equation*}
    \dot{V} = -\|x_1\|^2 - 2p_0 \sum_{n=N_0+1}^N \mu_n \xi_n^2 + 2p_0 \sum_{n=N_0+1}^N \xi_n (F_2)_n x_1,
\end{equation*}
where $(F_2)_n$ is the $n$-th row of $F_2$ satisfying $\|(F_2)_n\|^2\leqslant \bar{\beta}^2 = \beta^2 N_1 $. To estimate the last term, we apply Young's inequality, $2ab \le \mu_n a^2 + b^2/\mu_n$, which bounds the last term by $p_0 \sum_{n>N_0} (\mu_n \xi_n^2 + \bar{\beta}^2 \|x_1\|^2 / \mu_n)$. Then, we have
\begin{equation*}
    \dot{V} \le - \left( 1 - p_0 \sum_{n=N_0+1}^{N} \frac{\bar{\beta}^2}{\mu_n} \right) \|x_1\|^2 - p_0 \mu_{N_0+1} \|\xi\|^2.
\end{equation*}
Since $\lambda_n \sim \mathcal{O}(n^2)$, the series $\sum_{n=N_0+1}^{\infty} \frac{\bar{\beta}^2}{\mu_n} < \infty$ converges independent of $N$. Choosing $p_0>0$ small enough ensures $\dot{V} \le -p_3 \|x_0\|^2$ for some $p_3>0$ independent of $N$. Note that there exist constants $p_1, p_2 > 0$ independent of $N$ such that $p_1 \|x_0\|^2 \le V \le p_2 \|x_0\|^2$. This implies $\dot{V} \le - \frac{p_3}{p_2} V$. The conclusion then directly follows from the comparison principle. \qedopen
\end{proof}
\begin{lemma}\label{lemma:invarience-gain}
Consider the pair \eqref{pair:lemmaGain}. Suppose that Assumptions \ref{assump:1}--\ref{assump:ApCp} hold.
Then  $(A_S, (\mathcal{C}_1+C^{N-N_0}\mathcal{K}))$ is observable, and the gain  in \eqref{gain:obsv_ori}, denoted by $\mathbb{L}=\col \{ \tilde{\mathbb{L}}_1, \mathcal{K} \tilde{\mathbb{L}}_1\}$,  has the following properties:
\begin{enumerate}
    \item $\|\mathbb{L}\|$ is uniformly bounded as $N \to \infty$;
    \item There exist $\delta_1>0$ and $M>0$ (independent of $N$) such that 
     $ \|e^{(\bar{A}+\mathbb{L}\mathcal{\bar{C}}+\delta I)t}\|\leqslant M e^{-\delta_1 t}.    $    
    \end{enumerate}
\end{lemma}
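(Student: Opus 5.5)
The plan is to block-triangularize $\bar A$ with the similarity transformation built from the Sylvester solution $\mathcal K$ of \eqref{eq:sylvester}. Let
\[
T=\begin{bmatrix} I_{N_1} & 0\\ -\mathcal{K} & I_{N_2}\end{bmatrix},\qquad T^{-1}=\begin{bmatrix} I_{N_1} & 0\\ \mathcal{K} & I_{N_2}\end{bmatrix}.
\]
Using $A^{N-N_0}\mathcal K-\mathcal K A_S=-\mathcal B$, the $(2,1)$ block of $T\bar A T^{-1}$ is $-\mathcal K A_S+\mathcal B+A^{N-N_0}\mathcal K=0$. Hence
\[
T\bar AT^{-1}=\mathrm{diag}\{A_S,A^{N-N_0}\},\qquad \bar CT^{-1}=[\,\mathcal C_1+C^{N-N_0}\mathcal K\ \ C^{N-N_0}\,].
\]
Observability is invariant under similarity, and Lemma \ref{lem:observability} gives observability of $(\bar A,\bar C)$. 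For this we need Assumption \ref{assum:PN}, which holds for $N$ large by Assumption \ref{assump:1}, and observability of $(S,\Gamma)$, which follows from Proposition \ref{propo:SGobservability}; Proposition \ref{propo:solvabilityOfRegu} supplies $\Pi,\Gamma$. So the transformed pair is observable. The PBH test then shows that the block-diagonal subpair $(A_S,\mathcal C_1+C^{N-N_0}\mathcal K)$ is observable. Indeed, an eigenvector $\col\{\psi,0\}$ of the block-diagonal matrix that is annihilated by the output must have $\psi=0$, because $\sigma(A_S)\cap\sigma(A^{N-N_0})=\emptyset$ lets us pad with zero.

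For property 1, note that $\tilde{\mathbb L}_1\in\mathbb R^{N_1}$ with $N_1=N_0+n_w$ fixed. The matrix $A_S$ does not depend on $N$, and $\mathcal C_1+C^{N-N_0}\mathcal K$ converges as $N\to\infty$. This is because the rows $k_i$ of $\mathcal K$ do not depend on $N$ (they solve \eqref{eq:row_system}), and by Lemma \ref{propo:Knorm} the sum $\sum_i c_i k_i^\top$ converges absolutely ($\{c_i\}\in\ell^2$, $\|k_i\|\lesssim i^{-2}$). Denote the limit by $\mathcal C_\infty$. The main obstacle is making the choice of $\tilde{\mathbb L}_1$ uniform in $N$. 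I would first check that $(A_S,\mathcal C_\infty)$ is observable, by the same PBH argument applied to the infinite-dimensional analogue with Assumption \ref{assump:1} in place of \ref{assum:PN}: $\mathcal C_\infty$ corresponds to the full transfer function $\PPP(s)$ on $\sigma(S)$. Ackermann's formula is continuous in the output row as long as the observability matrix stays invertible. Its determinant converges to a nonzero limit, so it is bounded away from zero for all $N$ large. Hence $\tilde{\mathbb L}_1$ is bounded uniformly in $N$, and the eigenvalues of $A_S+\tilde{\mathbb L}_1(\mathcal C_1+C^{N-N_0}\mathcal K)+\delta I$ are the fixed assigned ones. Then $\|\mathbb L\|\le(1+M_0)\|\tilde{\mathbb L}_1\|$ by Lemma \ref{propo:Knorm}.

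For property 2, since $\mathbb L=T^{-1}\col\{\tilde{\mathbb L}_1,0\}$, we get
\[
T(\bar A+\mathbb L\bar C)T^{-1}=\begin{bmatrix} A_S+\tilde{\mathbb L}_1(\mathcal C_1+C^{N-N_0}\mathcal K) & \tilde{\mathbb L}_1C^{N-N_0}\\ 0 & A^{N-N_0}\end{bmatrix}.
\]
This matrix is block upper triangular, whereas Lemma \ref{proposi:F2} is stated for lower triangular matrices. Taking transposes fixes this, since $\|e^{Mt}\|=\|e^{M^\top t}\|$. The transposed matrix has $F_0=(A_S+\tilde{\mathbb L}_1(\cdots))^\top$ and $F_2=(\tilde{\mathbb L}_1C^{N-N_0})^\top$, whose entries are bounded by $\sup\|\tilde{\mathbb L}_1\|\,\sup|c_i|$. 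We also need $\|e^{(F_0+\delta I)t}\|\le Me^{-\alpha t}$ uniformly in $N$. This follows because these matrices converge to a Hurwitz limit, so a common Lyapunov matrix works for $N$ large; finitely many small $N$ are handled trivially. Lemma \ref{proposi:F2} then gives the exponential bound, and transferring back costs at most the factor $\|T\|\|T^{-1}\|\le(1+M_0)^2$.
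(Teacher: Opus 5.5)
Your proposal is correct and follows essentially the paper's route: the Sylvester similarity that block-diagonalizes $\bar A$, observability of the limit pair $(A_S,\mathcal C_\infty)$ combined with continuity of Ackermann's formula to bound $\tilde{\mathbb L}_1$ uniformly, and Lemma \ref{proposi:F2} with the factor $\|T\|\|T^{-1}\|\le(1+M_0)^2$ for the exponential estimate. Several of your steps make explicit what the paper leaves implicit: the transposition needed to match the lower-triangular form of Lemma \ref{proposi:F2}, the fact that the rows of $\mathcal K$ do not depend on $N$, and the caveat that Assumption \ref{assum:PN} is guaranteed only for large $N$ (one small correction: padding an eigenvector of $A_S$ with zeros does not actually require $\sigma(A_S)\cap\sigma(A^{N-N_0})=\emptyset$).
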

\begin{proof}
 By invoking Proposition \ref{propo:SGobservability} and Lemma \ref{lem:observability}, based on the assumptions of this lemma, the pair \eqref{pair:ABGaS} is observable.  
From Lemma \ref{propo:Knorm}, \eqref{eq:sylvester} has a solution $\mathcal{K}$ with  bounded norm $\|\mathcal{K}\| \leqslant M_1$ independently of $N$ for some $M_1>0$.

Consider the similarity transformation defined by: 
 \[ \mathbb{T} =
\biggl[
\begin{array}{cc}
I & 0 \\[2pt]
\mathcal{K} & I
\end{array}
\biggr],
\quad
\mathbb{T}^{-1} =
\biggl[
\begin{array}{cc}
I & 0 \\[2pt]
-\mathcal{K} & I
\end{array}
\biggr].
\]
Applying it to the system matrices and using \eqref{eq:sylvester} yield:
\begin{equation}
\resizebox{0.92\linewidth}{!}{$
    \tilde{A} = \mathbb{T}^{-1} \bar{A} \mathbb{T} = 
    \begin{bmatrix} A_S & 0 \\ 0 & A^{N-N_0} \end{bmatrix}, \ 
    \tilde{C} = \bar{C} \mathbb{T} = 
    \begin{bmatrix} \mathcal{C}_{N_1} & C^{N-N_0} \end{bmatrix},
$}
\end{equation}
where  $\mathcal{C}_{N_1} = \mathcal{C}_1 + C^{N-N_0} \mathcal{K}\in \mathbb{R}^{1\times N_1}$ depends on $N$ since $C^{N-N_0} \mathcal{K}$ depends on $N$.
Since similarity transformations preserve observability,  the pair $(\tilde{A}, \tilde{C})$ is observable, and so the pair $(A_S, \mathcal{C}_{N_1})$ is observable.

In the transformed coordinates, the observer gain $\tilde{\mathbb{L}} = \begin{bmatrix} \tilde{\mathbb{L}}_1 \\ 0 \end{bmatrix}$ where $\tilde{\mathbb{L}}_1$ is designed as in step 4. The resulting matrix is 
\begin{equation}\label{triangularMa}
	\tilde{A} + \tilde{\mathbb{L}} \tilde{C} = 
\begin{bmatrix} 
A_S + \tilde{\mathbb{L}}_1 \mathcal{C}_{N_1} & \tilde{\mathbb{L}}_1 C^{N-N_0} \\
0 & {A^{N-N_0}} 
\end{bmatrix},
\end{equation} 
which is exponentially stable with decay rate $\delta$.

We now focus on the uniform boundedness of $\tilde{\mathbb{L}}_1$ in $N$.
Since the components of $C^{N-N_0}$ and the row norms of $\mathcal{K}$ both form square-summable sequences (see Lemma \ref{propo:Knorm}), 
 $\mathcal{C}_{N_1}$  converges to a limiting vector $\mathcal{C}_f \in \mathbb{R}^{1\times N_1}$ as $N\to \infty$.
By Proposition \ref{propo:SGobservability}, Assumptions \ref{assump:1} and \ref{assump:ApCp}, the pair $(S, \Gamma)$ is observable.
Based on  Lemma \ref{lem:observability} and \eqref{pair:lemmaGain}, the pair $(\bar{A},{\bar{C}})$  is observable if and only if the  transfer function $G^N(s)$ in \eqref{transf:GN} does not vanish at any $s  \in \sigma(S)\bigcap \rho(A^N)$.
As $N \to \infty$, the transfer function of the truncated system converges pointwise to the infinite-dimensional transfer function $\PPP(s)$ for all $s \in \sigma(S)\bigcap \rho(A)$.
By Assumption \ref{assump:1}, $\PPP(s) \neq 0$ for all $s \in \sigma(S)\bigcap \rho(A)$,
and so $(\bar{A},{\bar{C}})$ is observable as $N\to \infty$. Consequently, the pair $(A_S,\mathcal{C}_f)$ is observable.
The observability of the limit $(A_S,\mathcal{C}_f)$ implies that
the observability matrix $\mathcal{O}_f=\col\{C_f,C_fA_S,\cdots, C_f A_S^{N_1-1} \}$ has full rank. Since $\mathcal{C}_{N_1}$ converges to $\mathcal{C}_f$, by continuity,
 $\mathcal{O}_{N_1}=\col\{C_{N_1},C_{N_1}A_S,\cdots, C_{N_1}A_S^{N_1-1} \}$ converges to $\mathcal{O}_f$ as $N\to \infty$. It follows $0<\|\mathcal{O}_{N_1} ^{-1}\|< \epsilon$ for some $\epsilon>0$ independent of $N$.

For the system $(A_S, \mathcal{C}_{N_1})$, pole placement (Ackermann's formula) yields a gain $\tilde{\mathbb{L}}_1$ satisfying $\|\tilde{\mathbb{L}}_1\| \leqslant M_L$, where $M_L$ depends only on $\epsilon$, $A_S$, and the desired pole locations. 
The original observer gain is obtained by transforming back:
\[
\mathbb{L} = \mathbb{T} \tilde{\mathbb{L}} = 
\begin{bmatrix} I & 0 \\ \mathcal{K} & I \end{bmatrix}
\begin{bmatrix} \tilde{\mathbb{L}}_1 \\ 0 \end{bmatrix} = 
\begin{bmatrix} \tilde{\mathbb{L}}_1 \\ \mathcal{K} \tilde{\mathbb{L}}_1 \end{bmatrix}.
\]
Its norm satisfies:
\[
\|\mathbb{L}\| \leqslant \|\tilde{\mathbb{L}}_1\| + \|\mathcal{K} \tilde{\mathbb{L}}_1\| \leqslant 
\|\tilde{\mathbb{L}}_1\| (1 + \|\mathcal{K}\|) \leqslant M_L (1 + M_1),
\]
which is independent of $N$.


It remains to show the second point. Since $\|\mathcal{K}\|\leqslant M_1$ for some $M_1>0$  independent of $N$, it is easy to check that $1\leqslant \|\mathbb{T}\| \|\mathbb{T}^{-1}\|\leqslant (1+M_1)^2 $. 
It suffices to show      
    $\|e^{(\tilde{A}+\tilde{\mathbb{L}} \tilde{C}+\delta I)t}\| \leqslant M e^{-\delta_1}
    $ for some $M>0$ and $\delta_1>0$. From the diagonal structure of ${A^{N-N_0}}$, we know $\|{A^{N-N_0}}+\delta I\|\leqslant M_2 e^{-\delta_2 t} $  for some $M_2 >0$ and $ \delta_2>0 $ independent of $N $. Note that $\|\tilde{\mathbb{L}}_1C^{N-N_0}\|<M_3 $ for some $M_3>0$  independent of $N $. According to Lemma \ref{proposi:F2} and \eqref{triangularMa}, we just need to show 
    \begin{equation}\label{ex1}
    	\|e^{(A_S + \tilde{\mathbb{L}}_1 \mathcal{C}_{N_1} +\delta I)t}\|\leqslant M_4 e^{-\delta_4 t}
    \end{equation}
        for some positive $M_4$ and $\delta_4$ independent of $N$.
        Note that $\mathcal{C}_{N_1}$ converges to $\mathcal{C}_{f}$ as $N\to \infty $ and $\tilde{\mathbb{L}}_1$ continuously relies on $\mathcal{C}_{N_1}$. It follows that there exist $N^*>0$ such that for any $N>N^*$, \eqref{ex1} holds.  
        This completes the proof.\qedopen
\end{proof}

Consider the closed-loop system \eqref{closedMod} with notations \eqref{eq:system_matrices}.
To state the main result, denote
\begin{equation}\label{notation}
\resizebox{0.98\linewidth}{!}{$
\displaystyle 
\begin{aligned}
M_{11} &= P F\! +\! F^\top P\! +\! 2\delta I 
+ \frac{\alpha( N-1)}{\pi^2 p_m (N-1)^2\! -\! q_M}b_M^2 \bar{K}^T \bar{K}, \\
M_{12} &= P\,\mathcal{L}, \quad  
M_{22} = 2\big(-\lambda_{N+1}  + \delta + \tfrac{1}{2\alpha}\lambda_{N+1}\big)\,\|c\|_N^{-2}, \\
 M_{33} & =-\frac{\alpha \|c\|_N^2} {\lambda_{N+1}}, \quad   \bar{M}_{22}=2\big(-\lambda_{N+1}  + \delta \big)\|c\|_N^{-2},\\
 M_{\alpha}&=(1-2\alpha)\lambda_{N+1}+2\alpha \delta.
\end{aligned} $}
\end{equation}
\begin{theorem}\label{thm:main}
Consider the system \eqref{eq:heat2} with the tracking error \eqref{tracking error}, where $c\in Z$ and $z^o(\cdot,0)\in Z$. Given $\delta>0$, choose $N_0$  such that $\lambda_{N_0+1}>\delta$ and fix $N\geqslant N_0+1.$  Let Assumptions \ref{assump:1}--\ref{assump:ApCp} hold. Consider the control law \eqref{controlLaw} based on the observer \eqref{obsererfinite}.

If there exist a matrix $P>0$ and a scalar $\alpha>\frac{1}{2}$ such that the following LMIs are feasible:
\begin{equation} \label{LMI:nonDelay}
\bar{\mathcal{M}}=
\begin{medsize} \begin{bmatrix}
M_{11} & M_{12} & 0 \\
* & \bar{M}_{22} & 1 \\ 
* & * & M_{33}
\end{bmatrix}\end{medsize}<0,\ \ M_\alpha<0.
\end{equation}
then the control law \eqref{controlLaw} employing observer \eqref{obsererfinite} solves the output regulation problem. Specifically, the state $z^o(\cdot,t)$ of \eqref{eq:heat2} converges to the steady-state trajectory in $Z$  exponentially with decay rate $\delta$, and the tracking error $e(t)$ converges to zero with the same rate:
\begin{equation}\label{eq:decay}
   \begin{aligned}
      \|z^0(\cdot,t)-\Pi w(t)\|^2_{L^2}\; \leqslant\; & \alpha_0  e^{-2\delta t}\|z^0(\cdot,0)-\Pi w(0)\|^2_{L^2}, \\
      |e(t)|^2\; \leqslant \; &\alpha_1  e^{-2\delta t} |y(0)-r(0)|^2
   \end{aligned}
\end{equation}
for some $ \alpha_0, \alpha_1 \geqslant 1.$

Moreover, the LMIs are always feasible for a sufficiently large $N$.
\end{theorem}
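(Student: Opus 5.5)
The proof will be a Lyapunov argument in the style of \cite{KATZ2020109285}, \cite{katz2022delayed}, applied to the closed-loop system \eqref{closedMod}. I would take
\[
V(t)=X_0^\top(t) P X_0(t)+\sum_{n=N+1}^\infty z_n^2(t),
\]
which is equivalent to $\|z(\cdot,t)\|^2_{L^2}+\|\tilde w(t)\|^2$ up to constants depending on $P$. The goal is to show $\dot V+2\delta V\leqslant 0$. Differentiating along \eqref{closedMod} gives
\[
\dot V+2\delta V = X_0^\top(PF+F^\top P+2\delta P)X_0+2X_0^\top P\mathcal L\zeta+2\sum_{n>N}(-\lambda_n+\delta)z_n^2+2\sum_{n>N}z_nb_n\bar K X_0 .
\]
The $2\delta P$ versus $2\delta I$ discrepancy with $M_{11}$ is absorbed by normalization, as in the paper's convention.

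The cross term $2\sum_{n>N} z_n b_n\bar KX_0$ is handled by Young's inequality with weight $\lambda_n/\alpha$:
\[
2z_nb_n\bar KX_0\leqslant \frac{\lambda_n}{\alpha}z_n^2+\frac{\alpha b_M^2}{\lambda_n}|\bar KX_0|^2 .
\]
One then bounds $\sum_{n>N}\lambda_n^{-1}$ using \eqref{growthCondition}. This produces the term $\frac{\alpha(N-1)}{\pi^2p_m(N-1)^2-q_M}b_M^2\bar K^\top\bar K$ in $M_{11}$, possibly via a crude integral comparison. The remaining tail is $\sum_{n>N}2(-\lambda_n+\delta+\frac{\lambda_n}{2\alpha})z_n^2$. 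Since $M_\alpha<0$, the coefficient $-\lambda_n(1-\frac1{2\alpha})+\delta$ is decreasing in $n$ and negative, so it is bounded by its value at $n=N+1$.

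The term $\zeta=\sum_{n>N}c_nz_n$ is treated by Cauchy–Schwarz, $\zeta^2\leqslant \|c\|_N^2\sum_{n>N}z_n^2$ with $\|c\|_N^2=\sum_{n>N}c_n^2$. This gives $\sum_{n>N} z_n^2\geqslant \|c\|_N^{-2}\zeta^2$, and when multiplied by the negative tail coefficient it yields $M_{22}\zeta^2$. To match the stated $\bar M_{22}$, I would keep the $\frac{\lambda_{N+1}}{2\alpha}$ part separate and use an S-procedure-type auxiliary term. Concretely, the third row and column $M_{33}$ with the off-diagonal $1$ encode $\frac{\lambda_{N+1}}{\alpha\|c\|_N^2}\zeta^2$ through a Schur complement, since $-1/M_{33}=\lambda_{N+1}/(\alpha\|c\|_N^2)$. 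Then $\bar{\mathcal M}<0$ is equivalent by Schur complement to
\[
\begin{bmatrix} M_{11}&M_{12}\\ * & M_{22}\end{bmatrix}<0,
\]
which yields $\dot V+2\delta V\leqslant 0$ for the vector $\col\{X_0,\zeta\}$. The comparison principle then gives exponential decay of $V$. This bounds $\|z\|^2$, which is the first inequality of \eqref{eq:decay} once one recalls that $z=z^o-\Pi w$ and that the observer is initialized at zero. It also bounds $e=\bar C$-type combinations of $z$, which gives the second inequality; the right-hand constant is absorbed into $\alpha_1$.

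The main obstacle is feasibility for large $N$, because $F$, $P$, $\mathcal L$ all grow in dimension. By Lemma~\ref{lemma:invarience-gain}, $A_L+\delta I$ is stable uniformly in $N$ (it is similar to $\bar A+\mathbb L\bar C$) and $\|\mathbb L\|$ is bounded. Moreover, $A_K+\delta I$ has the lower-triangular form of Lemma~\ref{proposi:F2} with bounded $B^{N-N_0}K_0$, so it is also uniformly stable. Since $F$ is block triangular with coupling $L^NC^N$ of bounded norm, I would apply Lemma~\ref{proposi:F2}-type reasoning once more to conclude $\|e^{(F+\delta I)t}\|\leqslant Me^{-\delta_1 t}$ uniformly, and take $P$ solving $P(F+\delta I)+(F+\delta I)^\top P=-I$ with $\|P\|$ bounded independently of $N$. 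Then $M_{11}\leqslant -(1-\varepsilon_N)I$, where $\varepsilon_N=O(1/N)$ comes from the $\bar K$ term (note $\|\bar K\|$ is bounded). Also $\|M_{12}\|$ is bounded, $\bar M_{22}\to-\infty$ like $-\lambda_{N+1}\|c\|_N^{-2}$, and $M_{33}\to 0^-$ with $1/|M_{33}|=\lambda_{N+1}/(\alpha\|c\|_N^2)$. Choosing, say, $\alpha=1$ makes $|\bar M_{22}|$ dominate $1/|M_{33}|$ by a factor of about $2$ and dominate $\|M_{12}\|^2$. A Schur complement check therefore gives $\bar{\mathcal M}<0$ for large $N$, and $M_\alpha<0$ holds trivially.
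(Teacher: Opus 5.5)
Your proposal is correct and follows the paper's proof essentially step for step. It uses the same Lyapunov function and the same Young's inequality with weight $\lambda_n/\alpha$ plus the integral bound on $\sum_{n>N}\lambda_n^{-1}$; the same monotonicity/Cauchy--Schwarz reduction of the tail to $\|c\|_N^{-2}\zeta^2$ and Schur complement with respect to $M_{33}$ linking $\bar{\mathcal M}<0$ to $\mathcal M<0$; and the same feasibility argument via the $N$-uniform bound on the solution $P$ of $(F+\delta I)^\top P+P(F+\delta I)=-I$, obtained from Lemmas \ref{proposi:F2} and \ref{lemma:invarience-gain}.

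Two caveats are inherited from the statement and are equally present in the paper's proof. First, the $2\delta I$ in $M_{11}$ must really be $2\delta P$. The LMI is not homogeneous in $P$, so no normalization absorbs the difference, and your step $M_{11}\leqslant-(1-\varepsilon_N)I$ already uses $2\delta P$. Second, $V(0)$ contains $\tilde w(0)$ and $\tilde z^N(0)$, so what you actually obtain is that $\|z^o(\cdot,t)-\Pi w(t)\|^2$ and $|e(t)|^2$ are bounded by a constant times $e^{-2\delta t}V(0)$. They are not bounded by $\|z^o(\cdot,0)-\Pi w(0)\|^2$ or $|y(0)-r(0)|^2$ alone, even with a zero-initialized observer.
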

\begin{openbox}
\textbf{Guideline on finding $N$:}\\
\textbf{Step 1.} Choose the desired decay rate $\delta$, and select $N_0$ such that $\lambda_{N_0+1} > \delta$. Initially set $N = N_0 + 1$. \\
\textbf{Step 2.}  If $N$ satisfies Assumption~\ref{assum:PN}, go to \textbf{Step 3}. Otherwise, increment $N$ to $N+1$ and repeat until Assumption~\ref{assum:PN} holds. \\
\textbf{Step 3.} Design the observer gain \eqref{gain:obsv_ori} based on the \textbf{ODA} (see Section~\ref{sec:obser_constr}). Check the feasibility of the LMI \eqref{LMI:nonDelay} with this $N$. If \eqref{LMI:nonDelay} is infeasible, increment $N$ to $N+1$ and return to \textbf{Step 2}. 
\end{openbox}
\begin{proof}
To prove the result, it sufficient to establish the stability of the closed-loop system \eqref{closedMod}. 
Define the Lyapunov function as 
\begin{equation}\label{ly:V}
	V(t)=X_0^T(t)P X_0(t) + \sum_{n=N+1}^{\infty} z_n^2(t)
\end{equation}
with $P>0$.

Differentiating $V(t)$ along \eqref{closedMod}, we have 
\begin{equation} \label{eq:Lyapu-nonmomal}
	\begin{aligned} 
\dot{V}(t)+& 2\delta V(t)  = X_0^\top \big(P F\! + F^\top P+\! 2\delta I\big) X_0
+\! 2\, X_0^\top P\, \mathcal{L} \,\zeta \\
&+ 2\sum_{n=N+1}^\infty (-\lambda_n+\delta)\, z_n^2
+ 2\sum_{n=N+1}^\infty z_n\, b_n\, \bar{K}X_0(t). 
\end{aligned}
\end{equation} 
Using the logarithmic inequality $\ln(\frac{1+h}{1-h}) \leqslant \frac{2h}{1-h^2}$, and assuming $N$ is large such that $\pi^2 p_m (N-1)^2 - q_M>0$, we derive:
\begin{align} \label{eq:tail_estimate}
    \sum_{n=N+1}^{\infty} \frac{1}{\lambda_n} 
    &\leqslant \int_{N-1}^{\infty} \frac{1}{\pi^2 p_m x^2 - q_M} \, \dd x \notag \\
    &= \frac{1}{2\pi\sqrt{p_m q_M}} \ln \left( \frac{\pi\sqrt{p_m}(N-1) + \sqrt{q_M}}{\pi\sqrt{p_m}(N-1) - \sqrt{q_M}} \right) \notag \\
    &\leqslant \frac{N-1}{\pi^2 p_m (N-1)^2 - q_M}.
\end{align}
Using Young's inequality, we have  for  $\alpha>0$, \vspace{-12pt} 
\begingroup
\small %
\begin{align}
&2\sum_{n=N+1}^\infty z_n\, b_n\, \bar{K} X_0(t) =
2\sum_{n=N+1}^{\infty} \lambda_n^{\frac{1}{2}}z_n(t)
\frac{b_n}{\lambda_n^{\frac{1}{2}}}  \bar{K} X_0(t) \nonumber \\
&\leqslant 
\frac{1}{\alpha} \sum_{n=N+1}^\infty \lambda_n z_n^2(t)
+ \frac{\alpha( N-1)}{\pi^2 p_m (N-1)^2 - q_M}
b_M^2|\bar{K} X_0(t)|^2,
\label{ineq:1}
\end{align} 
\endgroup
\vspace{-12pt}
\begin{sloppypar}
where the last term is an upper bound of $\sum_{n=N+1}^{\infty} \frac{1}{\lambda_n}b_n^2\, |\bar{K} X_0(t)|^2.$
\end{sloppypar}

Moreover,  from monotonicity of $\lambda_n$, $n\geqslant 1$, we have 
\begin{equation}\label{ineq:2}
\begin{split}
    2\sum_{n=N+1}^\infty & \left(-\lambda_n+\delta+\tfrac{1}{2\alpha} \lambda_{n}\right) z_n^2 \\[-2pt]
    & \leqslant 2\left(-\lambda_{N+1}+\delta+\tfrac{1}{2\alpha} \lambda_{N+1}\right)\|c\|_N^{-2} \zeta^2(t),
\end{split}
\end{equation}
where $\|c\|_N^2= \sum_{i=N+1}^\infty c_i^2$,
 provided  $M_\alpha<0$ and $\alpha>\frac{1}{2}$. For any $\alpha>\frac{1}{2}$, $M_\alpha<0$ holds for a sufficiently large $N$.

Let $\eta (t)=\col\{X_0(t),\zeta(t)\}.$ From \eqref{eq:Lyapu-nonmomal}, \eqref{ineq:1} and \eqref{ineq:2}, we have
\[
\dot{V}(t) + 2\delta V(t) 
\leqslant \eta^\top(t) \Mc\eta(t),
\]
where $\mathcal{M}=\sm{M_{11} & M_{12} \\
* & M_{22} }.
 $
If $\Mc<0$, then $V(t)<e^{-2\delta}V(0)$ which indicates the stability of $z$-subsystem of \eqref{eq:ABGaS} on $Z$. Since the tracking error $e(t)$ is the output of the $z$-subsystem and the output operator is bounded, the convergence of $e(t)$ follows from the fact that the output operator $C$ is bounded.

We now show the feasibility of $\Mc<0$ for large $N$. Note that $M_{22}<0$ for large $\lambda_{N}$. 
 By Schur complement, $\Mc<0$ holds iff 
\begin{equation} \label{LMI:Mc}
	 M_{11}- P\Lscr \frac{ \|c\|^2_N}{(-2+\frac{1}{\alpha})\lambda_{N+1}+\delta }{ \Lscr}^\top P <0.
\end{equation}
By the design process, $F+\delta I$ is Hurwitz, and so the Lyapunov equation 
\begin{equation}
	\label{LyapuEq}
	(F+\delta I )^\top P+ P(F+\delta I)=- I,
\end{equation} 
admits a  solution
\begin{equation}\label{solutionOfLyapu}
	P=\int_0^\infty e^{(F^\top+\delta I)  t} e^{(F+\delta I ) t}\dd t. 
\end{equation}
From \eqref{LMI:Mc} and \eqref{notation}, we know that  $M_{11}<0$ holds with the $P$ of \eqref{solutionOfLyapu} for large enough $N$.
Noting  the monotonicity of $\lambda_N$ and $\|c\|_N$, the second term in \eqref{LMI:Mc} tends to zero as $N\rightarrow \infty$ if $\|P\|$ and $\|\mathcal{L}\|$ is bounded independently of $N$. 
According to Lemma \ref{lemma:invarience-gain}, the observer gain $\|L^N\|$ and $\|L^w\|$ are uniformly bounded independent of $N$, so that we obtain $\|\Lscr\|$ in \eqref{eq:system_matrices}  is bounded independent of $N$.

To show  the uniform boundedness $P$ of \eqref{solutionOfLyapu}, it suffices to show that there exist  $\delta_1, M_1>0$ independent of $N$ such that $ \|e^{(F+\delta I)t}\|< M_1 e^{-\delta_1 t}$. 
Denote $F_\delta=F+\delta I$, $F_K=A_K+\delta I, F_L=A_L+\delta I$ and $F_1= [L^N C^N \ \ 0].$ 
Using the block-triangular structure of $F_\delta$, we have
$$
e^{F_\delta t}
= \sm{e^{F_K t} & \ \ \ \int_0^t e^{F_K (t-s)} F_1 e^{F_L s} \dd s \\
0 & e^{F_L t}} .
$$
and
\[
\|e^{F_\delta t}\| \leqslant 
\|e^{F_K t}\| + \|e^{F_L t}\| +
\int_0^t \|e^{F_K (t-s)}\| \|F_1\| \|e^{F_L s}\| \dd s.
\]
Clearly, 
$\|F_1\|\leqslant M_{F_1}$ for some $M_{F_1}>0$ independent of $N$. 
Providing that there exist $\delta_K,\delta_L, M_L, M_K>0$  independent of $N$ such that  $\|e^{(A_L+\delta I) t}\| \leqslant M_L e^{-\delta_L  t} $ and $ \|e^{{(A_K+\delta I)} t}\|\leqslant M_K e^{-\delta_K t} $,
we have 
\begin{align}
	 \|e^{F_\delta t}\| \leqslant &M_K e^{-\delta_K t} + M_L e^{-\delta_L t} \notag \\
    &+ M_K M_L M_{F_1} \int_0^t e^{-\delta_K (t-s)} e^{-\delta_L s} \dd s \label{ineq:mm}
\end{align}
where the integral satisfies
\begin{equation}\label{ineq:int}
	\int_0^t e^{-\delta_K (t-s)} e^{-\delta_L s} \dd s
\leqslant \frac{1}{|\delta_K-\delta_L|} e^{-\min(\delta_K,\delta_L)t}.
\end{equation}
From Lemma \ref{lemma:invarience-gain}, we have $\|e^{(A_L+\delta I) t}\| \leqslant M_L e^{-\delta_L  t} $ for some $\delta_L, M_L>0$  independent of $N$. 
Lemma \ref{proposi:F2} (where we take $F_0=A^{N_0}+B^{N_0}K_0$, $F_2=B^{N-N_0} K_0$, and $N_1=N_0$) further indicates $\|e^{(A_K+\delta I)t}\| \le M_K e^{-\delta_K t},$ for some $\delta_K, M_K>0$ independent of $N$, since both $\|K\|$ and the entries of $B^{N-N_0}K_0$ 
are uniformly bounded in $N$.
 Combined with \eqref{ineq:mm} and \eqref{ineq:int}, we conclude the uniform boundedness of $\|P\|.$    

It can be seen from \eqref{notation} that $M_{11}<0$ holds for large $N$, since 
  $\bar{K}=[K  \ \Gamma \ 0]$  is independent of $N$. 
 This shows $\mathcal{M}<0$ for large enough $N$, which is equivalent to $\bar{\Mc}<0$ after applying Schur complement to $\bar{\mathcal{M}}$. 
\qedopen
\end{proof}

\begin{remark} \label{Rem:noval}
In finite-dimensional observer designs meant only for the stabilization of linear parabolic systems (e.g., \cite{KATZ2020109285,katz2021finite,katz2022delayed}), the observer gain typically depends only on the first $N_0$ dominant modes. In such cases, this truncation is sufficient to guarantee that the closed-loop system matrix $F$ defined by \eqref{eq:system_matrices} is Hurwitz. 

For the output regulation problem considered here, however, restricting the controller design to the first $N_0$ modes is inadequate. 
Unlike the pure stabilization case, the presence of the term $B^{N-N_0}\Gamma$ can render $A_L$ in \eqref{eq:system_matrices} (and hence $F$) unstable if the gain only relies on the first $N_0$ modes and the exosystem $S$. 

This motivates our approach, where the observer gain has full order. This $N$-dependency introduces a major theoretical challenge: the LMI feasibility analysis methods from existing stabilization frameworks are no longer applicable. Consequently, we must develop a novel feasibility analysis and prove that the norm of the observer gain remains uniformly bounded independent of $N$.  This new analysis provides a fresh perspective for the LMI feasibility analysis.
\end{remark}

\section{Output regulation in the presence of measurement delay}
\label{sec:delay_regulation}
We consider the output regulation problem with a time-varying delay in the measurement. 
The system dynamics are given by \eqref{eq:heat2}, the reference signal $r(t)$ and the disturbances $\bar{d}_i(t)$ are generated by a linear finite-dimensional system \eqref{eq:exosystem1}, and the regulated output is given by \eqref{eq:3.2}. 
Meanwhile, the available \textit{measurement} is the delayed tracking error
\begin{equation}\label{measurementDel}
    \begin{aligned}
          & e(t-\tau_y(t)) = \int_0^1 z(x, t-\tau_y(t)) c(x) \dd x, \\ 
    & 0 \leqslant \tau_y(t) \leqslant \tau_M,
    \end{aligned}
\end{equation} 
with the initial condition $e(t-\tau_y(t)) = 0$ for $t-\tau_y(t) \leqslant 0$. 
Regarding the delay $\tau_y(t)$, we consider two cases: 
\begin{enumerate}[{label=\arabic*)}]
    \item \textit{Differentiable delay:} $\tau_y(t)\geqslant \tau_m>0$ is continuously differentiable, where the lower bound $\tau_m$ is required only for the well-posedness of the PDE. 
    \item \textit{Sawtooth delay:} $\tau_y(t) = t - t_k$ for $t \in [t_k, t_{k+1})$, 
    with $lim_{k\to \infty} t_k=\infty.$
\end{enumerate}

To solve this problem, we rely on the finite-dimensional truncated model \eqref{eq:ABGaS} derived in Section 3.2. Since the instantaneous tracking error $e(t)$ is unavailable, we adapt the observer design to utilize the available delayed measurement \eqref{measurementDel}. 
Specifically,  we employ the same finite-dimensional observer-based controller structure \eqref{obsererfinite}, but driven by the delayed measurement.
The observer is given by:
\begingroup            
\begin{equation}\label{obser:del}
\begin{split}
    \frac{\dd}{\dd t} \left[\begin{smallmatrix} \hat{w}(t) \\ \hat{z}^N(t) \end{smallmatrix}\right] &= 
    \left[\begin{smallmatrix} S & 0 \\ -B^N \Gamma & A^N \end{smallmatrix}\right] 
    \left[\begin{smallmatrix} \hat{w}(t) \\ \hat{z}^N(t) \end{smallmatrix}\right] 
    + \left[\begin{smallmatrix} 0 \\ B^N \end{smallmatrix}\right] u(t) \\
    &\quad + \left[\begin{smallmatrix} L^w \\ L^N \end{smallmatrix}\right] \big( C^N \hat{z}^N(t) - e(t-\tau_y(t)) \big),
\end{split}
\end{equation} 
\endgroup
 with the control law \eqref{controlLaw}, as before,
where the gains $K$ and $(L^w, L^N)$ are designed the same as in Section \ref{sec:obser_constr}.
We now derive the closed-loop error dynamics. Let $X_0(t)$ be the closed-loop state vector defined as in \eqref{eq:def_X0}. To account for the measurement delay, we introduce the difference term:
\begin{equation}
    v_{\tau}(t) = X_0(t) - X_0(t-\tau_y(t)).
\end{equation}
Additionally, define the delay-induced perturbation matrix as $\mathcal{L}_2 = \mathcal{L} \begin{bmatrix} C^N & 0 & -C^N \end{bmatrix}$. 
Based on \eqref{obsererfinite}, \eqref{obser:del}, \eqref{controlLaw} and \eqref{eq:def_X0}, the closed-loop system is summarized as:
\begin{subequations}\label{eq:clo-OutputDelay}
    \begin{align}
        \dot{X}_0(t) &= F X_0(t) + \mathcal{L} \zeta(t-\tau_y(t)) - \mathcal{L}_2 v_{\tau}(t), \label{eq:closed_loop_X0} \\
        \dot{z}_n(t) &= -\lambda_n z_n(t) + b_n \bar{K} X_0(t), \quad n \ge N+1, \label{eq:closed_loop_zn}
    \end{align}
\end{subequations}
where $F$, $\mathcal{L}$, $\bar{K}$ and $\zeta$ are defined in \eqref{eq:system_matrices}.

 Following arguments in \cite{KATZ2021109364}, for differentiable delay, assume that there exists a unique $t_*\in[\tau_m, \tau_M ]$ such that $t_*=\tau_y(t_*)$. For any initial value $z_0\in \mathcal{D}(A)$, the closed-loop system \eqref{eq:clo-OutputDelay} 
 admits a unique solution satisfying $z\in C([0,\infty), L^2(0,1)).$ 
 For sawtooth delay, well-posedness is similarly guaranteed by the arguments in \cite{KATZ2021109364}. 

To compensate for the term $\zeta(t-\tau_y(t))$, we need the following lemma:
\begin{lemma}[Halanay's inequality, \cite{MR3237720}]\label{lem:halanay}
Let $0 < \delta_1 < \delta_0$ and let 
$V_h : [t_0 - \tau_M, \infty) \to [0,\infty)$
be an absolutely continuous function that satisfies
\[
\dot V_h(t) + 2\delta_0 V_h(t) - 2\delta_1 
\sup_{-\tau_M \le \theta \le 0} V_h(t+\theta) \leqslant 0, 
\qquad t \geqslant t_0.
\]
Then 
$V_h(t) \leqslant \exp\!\left( -2\delta_{\tau_M} (t - t_0) \right)
\sup_{-\tau_M \leqslant \theta \leqslant 0} V_h(t_0+\theta), 
\quad t \geqslant t_0,$
where $\delta_{\tau_M} > 0$ is the unique positive solution of
$
\delta_{\tau_M} = \delta_0 - \delta_1 \exp(2\delta_{\tau_M}\tau_M).
$
\end{lemma}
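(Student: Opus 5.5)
This is the classical Halanay inequality, and I would prove it by comparison with an explicit exponential bound, using a barrier (first-crossing) argument. Throughout I take $t_0$ as the initial time and write
\[
\bar V:=\sup_{-\tau_M\le\theta\le 0}V_h(t_0+\theta).
\]
\textbf{Step 1: the rate $\delta_{\tau_M}$.} Consider $h(\mu)=\mu-\delta_0+\delta_1 e^{2\mu\tau_M}$ on $[0,\infty)$. It satisfies $h(0)=\delta_1-\delta_0<0$, it is strictly increasing, and $h(\mu)\to\infty$ as $\mu\to\infty$. Hence $h$ has a unique positive zero, which is $\delta_{\tau_M}$, and clearly $\delta_{\tau_M}<\delta_0$.

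\textbf{Step 2: a comparison function with slack.} For $\varepsilon>0$ set
\[
W_\varepsilon(t)=(\bar V+\varepsilon)\,e^{-2\delta_{\tau_M}(t-t_0)}, \qquad t\ge t_0-\tau_M .
\]
On $[t_0-\tau_M,t_0]$ we have $W_\varepsilon\ge \bar V+\varepsilon>V_h$. Moreover, using the defining relation of $\delta_{\tau_M}$,
\[
\dot W_\varepsilon+2\delta_0 W_\varepsilon-2\delta_1\sup_{-\tau_M\le\theta\le0}W_\varepsilon(t+\theta)
=W_\varepsilon(t)\bigl(-2\delta_{\tau_M}+2\delta_0-2\delta_1e^{2\delta_{\tau_M}\tau_M}\bigr)=0 .
\]
So $W_\varepsilon$ satisfies the Halanay inequality with equality.

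\textbf{Step 3: first crossing.} Suppose $V_h(t)\ge W_\varepsilon(t)$ for some $t>t_0$. By continuity there is a first time $t_1>t_0$ with $V_h(t_1)=W_\varepsilon(t_1)$ and $V_h<W_\varepsilon$ on $[t_0-\tau_M,t_1)$. Then
\[
\sup_{-\tau_M\le\theta\le0}V_h(t_1+\theta)\le \sup_{-\tau_M\le\theta\le0}W_\varepsilon(t_1+\theta)=W_\varepsilon(t_1)e^{2\delta_{\tau_M}\tau_M}.
\]
Set $D=V_h-W_\varepsilon$. Whenever $\dot V_h$ exists, the hypothesis gives
\[
\dot D\le -2\delta_0 V_h+2\delta_1\sup V_h(\cdot+\theta)+2\delta_{\tau_M}W_\varepsilon .
\]
At $t_1$ this right-hand side is at most $W_\varepsilon(t_1)\bigl(-2\delta_0+2\delta_1e^{2\delta_{\tau_M}\tau_M}+2\delta_{\tau_M}\bigr)=0$.

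This is where the main obstacle lies. $V_h$ is only absolutely continuous, so a pointwise derivative at $t_1$ need not exist, and the bound at the single point $t_1$ gives no contradiction by itself. I would handle it in two ways together.

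First, make the bound strict. Replace $\delta_{\tau_M}$ in $W_\varepsilon$ by a slightly smaller rate $\mu<\delta_{\tau_M}$. Then $h(\mu)<0$, so the corresponding expression is strictly negative, $W_\varepsilon$ satisfies the inequality strictly, and one has $\dot D\le -c\,W_\varepsilon<0$ near $t_1$.

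Second, work on an interval rather than at a point. Near $t_1$, for a.e.\ $s\in(t_1-\eta,t_1)$, the supremum term is controlled by $W_\varepsilon(s)e^{2\mu\tau_M}+o(1)$, because $V_h<W_\varepsilon$ on $[t_0-\tau_M,s)$ and both functions are continuous. Hence $\dot D<0$ a.e.\ on a left neighborhood of $t_1$. Integrating, $D(t_1)<D(t_1-\eta)<0$, which contradicts $D(t_1)=0$.

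\textbf{Step 4: conclusion.} The contradiction shows $V_h<W_\varepsilon$ on $[t_0,\infty)$ for every $\varepsilon>0$ and every $\mu<\delta_{\tau_M}$. Letting $\varepsilon\to0$ and then $\mu\uparrow\delta_{\tau_M}$ gives the stated bound
\[
V_h(t)\le e^{-2\delta_{\tau_M}(t-t_0)}\,\sup_{-\tau_M\le\theta\le0}V_h(t_0+\theta),\qquad t\ge t_0 .
\]
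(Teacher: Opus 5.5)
The paper gives no proof of this lemma; it quotes Halanay's inequality from the cited monograph, so there is no internal argument to compare against. Your comparison (first-crossing) proof is correct and self-contained. Step 1 correctly identifies $\delta_{\tau_M}$ as the unique zero of the increasing function $h$. The slack $\varepsilon$ gives strict separation on $[t_0-\tau_M,t_0]$. You also rightly see that absolute continuity forces an integrated, almost-everywhere argument rather than a pointwise one at $t_1$.

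There are two small wording slips and one possible simplification.
\begin{itemize}
\item With rate $\mu<\delta_{\tau_M}$ one has $\dot W_\varepsilon+2\delta_0W_\varepsilon-2\delta_1\sup_\theta W_\varepsilon(\cdot+\theta)=-2h(\mu)W_\varepsilon>0$. So $W_\varepsilon$ satisfies the \emph{reverse} inequality strictly: it is a strict supersolution, which is exactly what a barrier needs. Your computation of $\dot D$ is right; only the phrase ``satisfies the inequality strictly'' is backwards.
\item For $t_0\le s<t_1$ the bound $\sup_\theta V_h(s+\theta)\le W_\varepsilon(s)e^{2\mu\tau_M}$ holds exactly, with no $o(1)$. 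The continuity argument is needed instead for the term $-2\delta_0V_h(s)$. Written out, $\dot D(s)\le 2h(\mu)W_\varepsilon(s)-2\delta_0D(s)$ a.e., and the right-hand side tends to $2h(\mu)W_\varepsilon(t_1)<0$ as $s\to t_1^-$.
\item The perturbation $\mu<\delta_{\tau_M}$ is not actually needed. With $\mu=\delta_{\tau_M}$ the same estimate reads $\dot D\le-2\delta_0D$ a.e.\ on $(t_0,t_1)$. Hence $e^{2\delta_0t}D(t)$ is nonincreasing on $[t_0,t_1]$, and $D(t_1)\le e^{-2\delta_0(t_1-t_0)}D(t_0)<0$. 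This contradicts $D(t_1)=0$ directly and removes the final limit $\mu\uparrow\delta_{\tau_M}$, leaving only $\varepsilon\to0$.
\end{itemize}
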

To state the result, denote
\begin{equation}\label{eq:matrix_H_def}
\setlength{\jot}{2pt}
\begin{aligned}
    &\bar{H} = \left[ \begin{smallmatrix} H_0 & \Lambda_1 \\ * & H_1 \end{smallmatrix} \right], \quad
    H_0 = \left[ \begin{smallmatrix} H_{11} & H_{12} \\ * & H_{22} \end{smallmatrix} \right], \quad
    H_1 = \left[ \begin{smallmatrix} H_{33} & H_{34} \\ * & H_{44} \end{smallmatrix} \right], \\
    &\Lambda_1 = \left[ \begin{smallmatrix} H_{13} & H_{14} \\ 0 & 0 \end{smallmatrix} \right], \quad
    \mathcal{H}_{\tau_M} = \Phi_1^\top R_1 \Phi_1, \quad \varepsilon_M = e^{-2 \delta_0\tau_M }, \\
   &H_{11} = M_{11}+ (1 - \varepsilon_M) S_1, \ H_{12} = P\Lscr, \ H_{22} = -\frac{2\delta_1 }{\|c\|_N^2}, \\
    &H_{13} = -P\Lscr_2 + 2\delta_1 P + \varepsilon_M S_1, \quad H_{14} = \varepsilon_M S_1, \\
    &H_{33} = -2\delta_1 P - \varepsilon_M (S_1 + R_1), \quad H_{34} = -\varepsilon_M (S_1 + G_1), \\
    &H_{44} = -\varepsilon_M (S_1 + R_1), \quad \Phi_1 = [\, F \;\; \Lscr \;\; -\Lscr_2 \;\; 0 \,], \\
    &\theta_{\tau}(t) = X_0(t-\tauy)- X_0(t-\tau_M).
\end{aligned}
\end{equation}

\begin{theorem}\label{thm:main2}
Consider the system \eqref{eq:heat2} with the measurement \eqref{measurementDel}, where $c\in Z$ and $z^o(\cdot,0)\in Z$. With the same assumptions as Theorem \ref{thm:main},  consider the control law \eqref{controlLaw} based on the observer \eqref{obser:del}. 

Let $\delta_0>\delta>0$ and define $\delta_1=\delta_0-\delta$. Given $\tau_M>0$, suppose there exist positive definite matrices $P, S_1, R_1$, a scalar $\alpha>\frac{1}{2}$, and a matrice $G_1$ such that the following LMIs hold:
\begin{equation}\label{eq:LMIdelay}
\begin{bmatrix} R_1 & G_1 \\ * & R_1 \end{bmatrix} \geqslant 0, \ \
\bar{H}+\tau_M^2 \Phi_1^\top R_1 \Phi_1 \leqslant 0, \ \ M_\alpha<0.
\end{equation}
Then, this controller 
solves the output regulation problem. Specifically, the state $z^o(\cdot,t)$ of \eqref{eq:heat2} converges to the steady-state trajectory in $Z$ exponentially with  decay rate $\delta_{\tau_M}$, and the tracking error $e(t)$ converges to zero with the same rate, i.e., \eqref{eq:decay} holds with $\delta$ changed by $\delta_{\tau_M}$.

Moreover, the LMI conditions \eqref{eq:LMIdelay} are always feasible for a sufficiently large $N$ and a sufficiently small $\tau_M$.
\end{theorem}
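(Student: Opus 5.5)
We will combine the Lyapunov argument of Theorem \ref{thm:main} with a Lyapunov--Krasovskii functional for the delayed measurement, and close it with Halanay's inequality (Lemma \ref{lem:halanay}). The functional we take is
\[
V(t)=V_0(t)+V_{S}(t)+V_{R}(t),\qquad V_0=X_0^\top P X_0+\sum_{n>N} z_n^2,
\]
with
\[
V_S=\int_{t-\tau_M}^t e^{-2\delta_0(t-s)}X_0^\top(s)S_1X_0(s)\,\dd s,
\qquad
V_R=\tau_M\int_{-\tau_M}^0\!\int_{t+\theta}^t e^{-2\delta_0(t-s)}\dot X_0^\top(s)R_1\dot X_0(s)\,\dd s\,\dd\theta .
\]
We will not try to remove $\zeta(t-\tau_y(t))$ inside the functional. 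Instead we will show that
\[
\dot V+2\delta_0 V-2\delta_1\sup_{-\tau_M\le\theta\le0}V(t+\theta)\le 0 .
\]
Halanay's lemma then gives decay with rate $\delta_{\tau_M}$. From this bound, exactly as at the end of the proof of Theorem \ref{thm:main}, we recover the $L^2$ estimate on $z=z^o-\Pi w$ and the estimate on $e=Cz$.

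The computation proceeds as follows. First, differentiate $X_0^\top PX_0$ along \eqref{eq:closed_loop_X0}. Treat the tail term $\sum_{n>N}$ exactly as in \eqref{ineq:1}--\eqref{ineq:2}, with the same $\alpha$ and the condition $M_\alpha<0$. This gives the block $M_{11}$ in $X_0$ and a negative multiple of $\zeta^2(t)$ coming from the tail.

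Next, handle the delayed signal. By Cauchy--Schwarz, $\zeta^2(t-\tau_y)\le\|c\|_N^2\sum_{n>N}z_n^2(t-\tau_y)$, and the right-hand side is bounded by $\|c\|_N^2\sup_\theta V(t+\theta)$. Hence $-2\delta_1\sup V$ can absorb $\frac{2\delta_1}{\|c\|_N^2}\zeta^2(t-\tau_y)$; this is $H_{22}$. The same supremum also absorbs $2\delta_1 X_0^\top(t-\tau_y)PX_0(t-\tau_y)$. Writing $X_0(t-\tau_y)=X_0(t)-v_\tau$ produces the entries $2\delta_1P$ in $H_{13}$ and $-2\delta_1P$ in $H_{33}$.

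Then treat $V_S$ and $V_R$ by the standard route. Jensen's inequality is applied on the two intervals $[t-\tau_M,t-\tau_y]$ and $[t-\tau_y,t]$, combined through Park's reciprocally convex lemma with slack $G_1$ and the constraint $\left[\begin{smallmatrix}R_1&G_1\\ *&R_1\end{smallmatrix}\right]\ge0$. This yields the $\varepsilon_M$-weighted $S_1,R_1,G_1$ terms in the variables $(X_0,\zeta(t-\tau_y),v_\tau,\theta_\tau)$. Finally, substitute $\dot X_0=\Phi_1\,\mathrm{col}\{X_0,\zeta(t-\tau_y),v_\tau,\theta_\tau\}$; this gives the term $\tau_M^2\Phi_1^\top R_1\Phi_1$, so the LMI \eqref{eq:LMIdelay} implies the Halanay inequality. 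One bookkeeping point must be checked: $\zeta(t)$ appears in $M_{11}$ through the tail while $\zeta(t-\tau_y)$ drives $X_0$. We will write $\zeta(t-\tau_y)$ in the $X_0$ equation, keep the tail's $\zeta(t)$ estimate purely negative, and drop it.

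For feasibility, let $\tau_M\to0$. Then $\varepsilon_M\to1$ and $\tau_M^2\Phi_1^\top R_1\Phi_1\to0$, provided $\|F\|$ stays controlled. We choose $S_1,R_1$ small relative to $I$ and $G_1=0$, and take $P$ from \eqref{solutionOfLyapu}, which is uniformly bounded in $N$ by the argument of Theorem \ref{thm:main}, with $\|\mathcal{L}\|$ bounded by Lemma \ref{lemma:invarience-gain}. With $\delta_0$ close to $\delta$, so that $\delta_1$ is small, a Schur complement reduces $\bar H<0$ to a perturbation of $\mathcal{M}<0$ from Theorem \ref{thm:main}; the $v_\tau$ block is made negative by $-2\delta_1P-\varepsilon_M R_1$.

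The main obstacle is the term $\tau_M^2\Phi_1^\top R_1\Phi_1$. The norm $\|F\|$ grows like $\lambda_N$ because $A^N$ sits inside $F$, so for fixed $N$ the term vanishes only as $\tau_M\to0$. The feasibility claim is therefore ``first $N$ large, then $\tau_M$ small,'' and that is the order in which we will argue it.
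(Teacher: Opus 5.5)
Your sufficiency argument follows the paper's proof. It uses the same functional $V+V_{S_1}+V_{R_1}$, absorbs $\zeta(t-\tau_y)$ and $X_0(t-\tau_y)=X_0-v_\tau$ into $-2\delta_1\sup V$, applies Jensen plus Park, substitutes $\dot X_0=\Phi_1\,\mathrm{col}\{X_0,\zeta(t-\tau_y),v_\tau,\theta_\tau\}$, drops the negative tail $2\sum_{n>N}\rho_n z_n^2$, and closes with Halanay. That part is fine.

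\textbf{The gap is in the feasibility argument.} You take $S_1,R_1$ small, $G_1=0$, and $\delta_0$ close to $\delta$ so that $\delta_1$ is small. With these choices the LMI fails, and the obstruction is the $X_0$--$v_\tau$ coupling.

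In $\bar H$ this coupling is $H_{13}=-P\mathcal{L}_2+2\delta_1P+\varepsilon_M S_1$. Its leading part $-P\mathcal{L}_2$, with $\mathcal{L}_2=\mathcal{L}[C^N\ 0\ -C^N]$, is bounded uniformly in $N$ but is neither small nor zero. Eliminating $v_\tau$ by a Schur complement adds $H_{13}\bigl(2\delta_1P+\varepsilon_M(S_1+R_1)\bigr)^{-1}H_{13}^\top$ to the $X_0$ block. This term blows up exactly as $\delta_1,S_1,R_1\to0$.

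So $\bar H<0$ is not a small perturbation of $\mathcal{M}<0$ under your choices. Negativity of the diagonal entry $-2\delta_1P-\varepsilon_MR_1$ alone does not help. The scalar analogue already shows this: $\left[\begin{smallmatrix}-1 & -p\ell\\ -p\ell & -(2\delta_1p+s+r)\end{smallmatrix}\right]<0$ requires $p^2\ell^2<2\delta_1p+s+r$, which fails as $r,s,\delta_1\to0$. Shrinking $\delta_1$ is also unnecessary and makes the $\zeta(t-\tau_y)$ block $H_{22}=-2\delta_1/\|c\|_N^2$ weaker. Moreover, $\delta_0>\delta$ is given data in the statement.

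\textbf{The fix is to go the other way, as the paper does.}
\begin{itemize}
\item Set $S_1=G_1=0$. Park's constraint is then trivial, $H_{14}=H_{34}=0$, and the $\theta_\tau$ block $-\varepsilon_MR_1$ decouples, since $\Phi_1$ has a zero in that slot.
\item Keep $\delta_1$ fixed.
\item Take $R_1$ large. The paper uses $R_1=N^{2.5}I$. Any $R_1=rI$ with $r\gg\sup_N\|P\mathcal{L}_2-2\delta_1P\|^2$ would also do, because $P$ from \eqref{solutionOfLyapu} and $\mathcal{L}$ are uniformly bounded.
\end{itemize}
With these choices the $v_\tau$ Schur term is small uniformly in $N$. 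The $\zeta(t-\tau_y)$ Schur term $\|c\|_N^2P\mathcal{L}\mathcal{L}^\top P/(2\delta_1)$ and the $\bar K^\top\bar K$ term in $M_{11}$ vanish as $N\to\infty$, leaving $M_{11}\approx-I$.

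Only after $N$ and $R_1$ are fixed do you let $\tau_M\to0$. This removes $1-\varepsilon_M$ and $\tau_M^2\Phi_1^\top R_1\Phi_1$, which has size about $\tau_M^2 r\|F\|^2$. Your remark that the order must be ``first $N$, then $\tau_M$'' is correct. It is exactly what permits $R_1$ to be large.
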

The proof is presented in the Appendix.

\section{Numerical examples}

To validate the theoretical results and the proposed regulator design, we consider a numerical example where the objective is to reject a sinusoidal distributed disturbance while tracking a constant reference signal. The finite difference method is used for the simulation.

The diffusion and reaction coefficients of system \eqref{eq:heat2} are $p(x) = 1$ and $q(x) = \pi^2$. The output is measured over a partial domain $[0, \bar{c}]$ with $\bar{c} = \frac{1}{2\sqrt{2}}$.

The disturbance signals are $\bar{d}_1(t) = \sin(3t)$, $\bar{d}_2(t)=0$, and $\bar{d}_3(t)=0$, with the location parameter $g(x)=1$. The output $y(t)$ is required to track a constant setpoint $r(t) = 1$. A numerical result shows that $|\PPP(0)|\approx 0.411$ and $|\PPP(i3)|\approx 0.123,$ so Assumption \ref{assump:1} holds.

The exosystem is given by $\dot{w} = Sw$ with state $w = [w_1, w_2, w_3]^T$. The first two states generate the sinusoidal signal $\sin(3t)$, and the third state generates the constant reference signal. The system matrices are defined as:
\begin{equation}
    S = \sm{
    0 & 3 & 0 \\ 
    -3 & 0 & 0 \\ 
    0 & 0 & 0 
}, \quad 
    w(0) = \sm{ 0 \\ 1 \\ 1 }.
\end{equation} 
Accordingly, the disturbance distribution vector $d_1$ and the reference mapping vector $Q$ are set as:
\begin{equation}
    d_1 = [1, \ 0, \ 0], \quad Q = [0, \ 0, \ -1].
\end{equation}
Solving the regulator equations, we obtain
\begin{equation}
    \Gamma \approx \begin{bmatrix} -0.85 & -0.41 & -2.43 \end{bmatrix}.
\end{equation}
It is easy to see $(S,\Gamma)$ is observable. 
A formal calculation gives the eigenpair $(\lambda_n, \phi_n)$ of the operator $-{A}$:  $\lambda_1= -1$, $\phi_1=1$ and 
 $\lambda_n= (n-1)^2\pi^2-1$, $\phi_n(x)=\sqrt{2}\cos((n-1)\pi x) $, $n= 2, 3,\cdots .$  The open-loop system \eqref{eq:heat2} is unstable since $-\lambda_1>0$.
We have $A^N=\diag\{-\lambda_n \}_{n=1}^N$, $B^N=\col\{ \phi_n (1)\}_{n=1}^N
 $ and $C^N=[c_1, c_1,\cdots, c_N ], c_1=\frac{1}{2\sqrt{2}}, c_n=\frac{\sqrt{2}}{(n-1)\pi }\sin (\frac{(n-1)\pi }{2\sqrt{2}}), n=2,3,\cdots, N. $  Clearly, $b_nc_n\neq 0$ for all $n\geqslant 1.$
 
 Fig.~\ref{fig1:main_figure}(left) shows that Assumption \ref{assum:PN} holds for all $N>0$.
By Lemma \ref{lem:observability}, \eqref{eq:ABGaS} is observable  for all $N>0$.  
 
 We select a desired decay rate of $\delta = 2$.  With this $\delta$, set $N_0=1$ so that $\lambda_{N_0+1}=\pi^2-1>\delta$. 
 We choose $K_0=-3.15$ such that $A^{N_0}+B^{N_0}K_0=-2.15<-\delta$. 
 
  To find $N$, we follow the \textbf{Guideline} introduced after Theorem \ref{thm:main}.  
  The observer gain is designed based on the \textbf{ODA}, where $N_1=N_0+n_w=4$ and $\tilde{\mathbb{L}}_1 $ is designed by Ackermann's formula 
   such that $$\sigma(A_S +\tilde{\mathbb{L}}_1 (\mathcal{C}_1 + C^{N-N_0} \mathcal{K}))=\{-2.1, -2.2, -2.3, -2.4 \}. $$
  By iterations, the dimension derived via LMI to guarantee this decay rate is $N=3$. The obtained gain is $\col\{L^w, L^N\}\approx\col\{ -42.1,
   50.1, -2.8, -36.2,  7.6, -1.4\}. $

  Fig.~\ref{fig1:main_figure}(right) confirms that the observer gain norm remains bounded as $N$ increases. By setting $N=3$, the resulting tracking performance, control input, and state evolution $z(x,t)$ are presented in Fig.~\ref{fig2:main_figure}. In the end, we introduce a constant measurement delay $\tau_y = 0.05$ while keeping other parameters unchanged; the tracking performance and state response are shown in Fig.~\ref{fig4:main_figure}.
\begin{figure}[htbp]
    \centering
    \begin{minipage}[b]{0.48\linewidth}
        \centering
        \includegraphics[width=\linewidth]{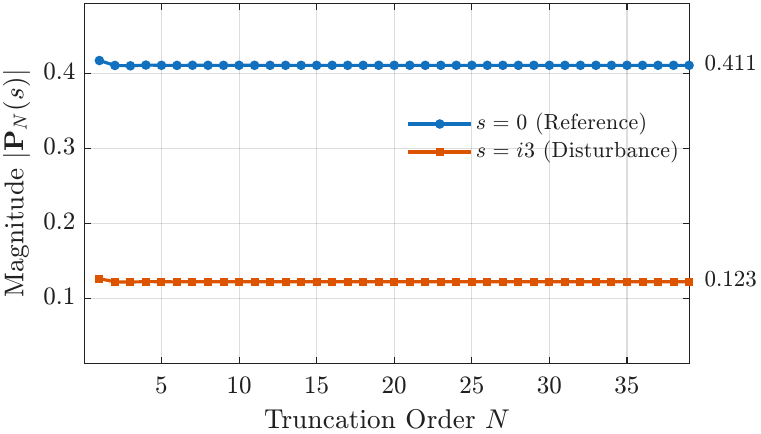}
    \end{minipage}
    \hfill 
    \begin{minipage}[b]{0.48\linewidth}
        \centering
        \includegraphics[width=\linewidth]{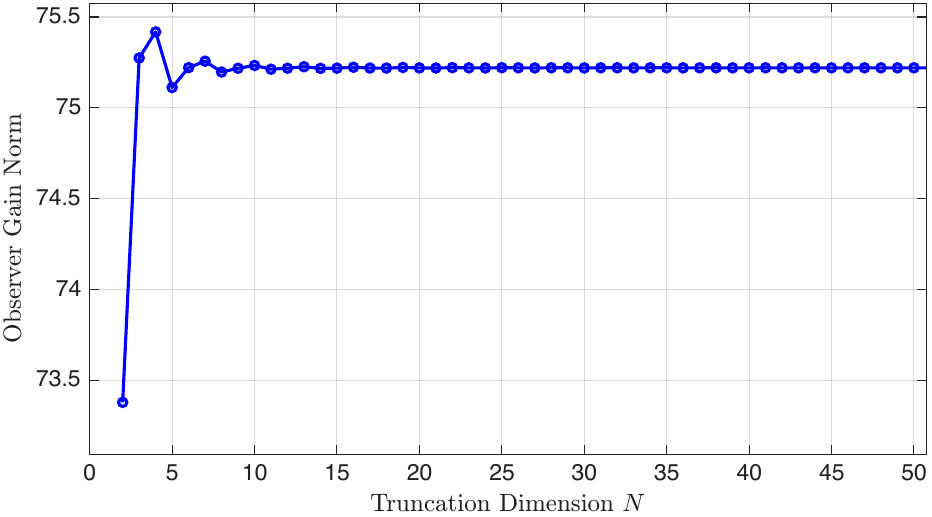}
    \end{minipage}
    
    \caption{Magnitude of $\mathbf{P}_N(s)$ (left) and Observer gain (right).}
    \label{fig1:main_figure}
\end{figure}
\begin{figure}[htbp]
    \centering
    \begin{minipage}[b]{0.48\linewidth}
        \centering
        \includegraphics[width=\linewidth]{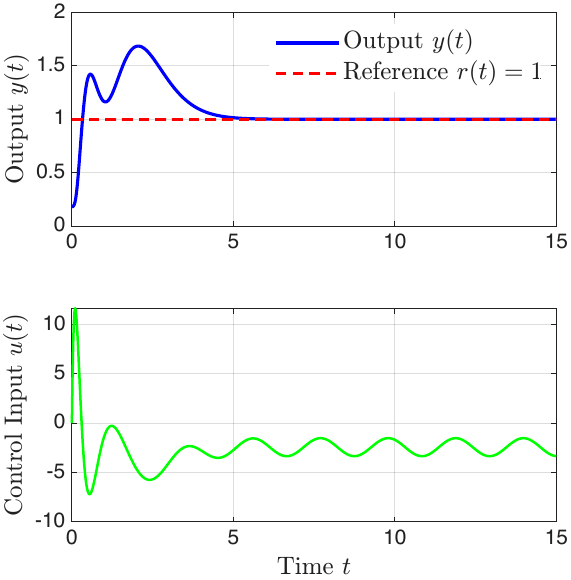}
    \end{minipage}
    \hfill
    \begin{minipage}[b]{0.48\linewidth}
        \centering
        \includegraphics[width=\linewidth]{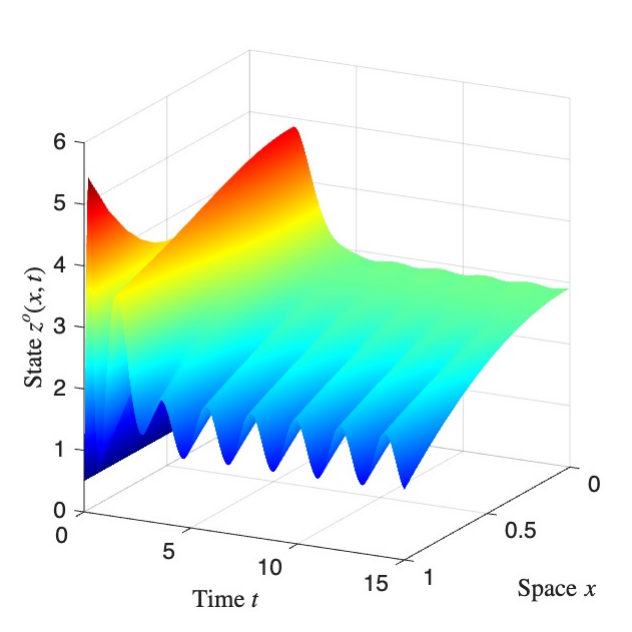}
    \end{minipage}
    \caption{Tracking performance and control input (left); State evolution $z(x,t)$ (right).}
    \label{fig2:main_figure}
\end{figure}

\begin{figure}[htbp]
    \centering
    \begin{minipage}[b]{0.48\linewidth}
        \centering
        \includegraphics[width=\linewidth]{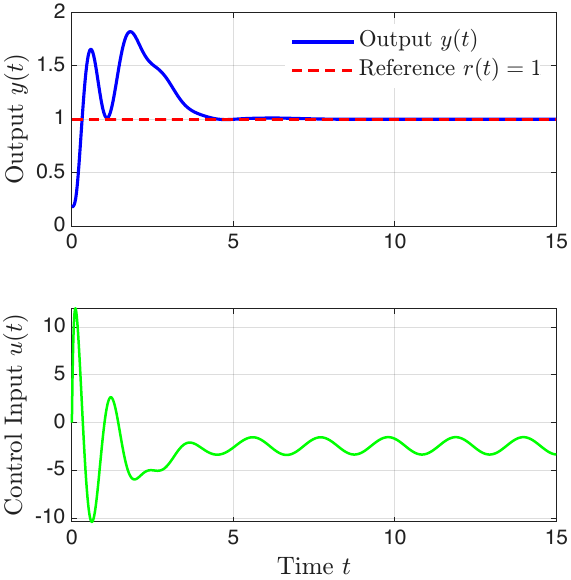}
    \end{minipage}
    \hfill
    \begin{minipage}[b]{0.48\linewidth}
        \centering
        \includegraphics[width=\linewidth]{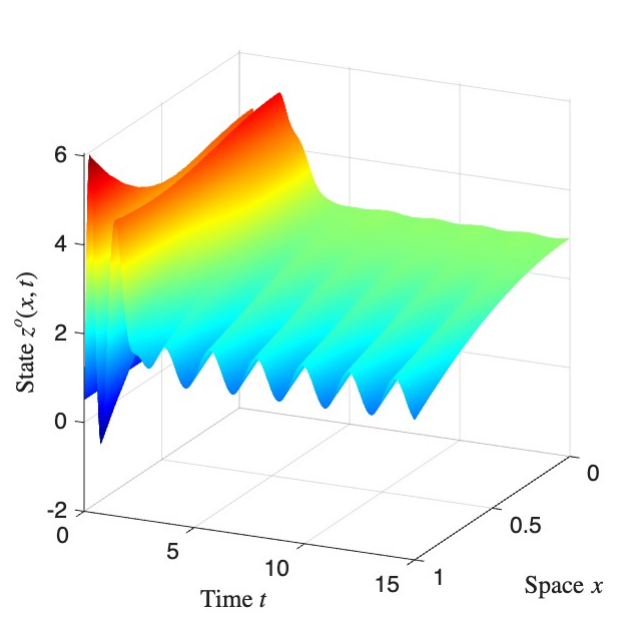}
    \end{minipage}
    \caption{Tracking performance, control input (left) and state evolution (right) with constant measurement delay $\tau_y=0.06$.}
    \label{fig4:main_figure}
\end{figure}

\section{Conclusion}
In this paper, we proposed a finite-dimensional regulator design using tracking error feedback for a class of parabolic systems with Neumann boundary actuation, non-local measurements, and unknown time-varying measurement delays.  It is also possible to extend the proposed method to consider unbounded measurements and other boundary inputs (e.g., \cite{katz2021finite}),  which needs further investigation.
\section*{Appendix}

\begin{myproof}{Proposition \ref{propo:solvabilityOfRegu}}
The regulator equation for $\Pi(x)$ is a linear non-homogeneous matrix ODE. Rearranging \eqref{eq:regulator1}:
\begin{equation} \label{eq:matrix_ode}
    -(p(x)\Pi'(x))' + \Pi(x)(S - q(x)I) = g(x)d_1, \quad \Pi'(0) = d_2. 
\end{equation}
Let $\Psi(x) \in \mathbb{R}^{\n \times \n}$ be the matrix fundamental solution satisfying the homogeneous equation $-(p\Psi')'(x) + \Psi(x)(S - q(x)I) = 0$ subject to $\Psi(0)=I$ and $\Psi'(0)=0$. Let $\Pi_p(x)$ be a particular solution to \eqref{eq:matrix_ode} with zero initial conditions $\Pi_p(0)=0$.
The general solution to \eqref{eq:matrix_ode} can be uniquely expressed as:
\begin{equation}
    \Pi(x) = \mathbf{K}\Psi(x) + \Pi_p(x),
\end{equation}
where $\mathbf{K} = \Pi(0) \in \mathbb{R}^{1 \times \n}$ is the unknown initial vector to be determined. 

Substituting the general solution into the zero-error constraint $\int_0^1 c(x)\Pi(x)\dd x=- Q$,  we obtain
\begin{equation}
    \mathbf{K} \left( \int_0^1 c(x)\Psi(x)\dd x \right) = -Q - \int_0^1 c(x)\Pi_p(x)\dd x.
\end{equation}
Define the moment matrix $\mathbf{M} := \int_0^1 c(x)\Psi(x)\dd x$. The existence of a unique solution pair $(\Pi, \Gamma)$ is equivalent to the existence of a unique vector $\mathbf{K}$, which holds if and only if $\mathbf{M}$ is invertible, i.e., $\det(\mathbf{M}) \neq 0$.

We now relate the spectrum of $\mathbf{M}$ to the transfer function zero dynamics. Let $(\lambda, v)$ be an eigenpair of the exosystem matrix $S$, such that $S v = \lambda v$.
Consider the vector function $y(x) := \Psi(x)v$. Applying the differential operator to $y(x)$ and using the homogeneous ODE of $\Psi(x)$, we have:
\begin{equation}
    -(py')' + (\lambda - q)y = 0, \quad \text{with } y(0)=v, \ y'(0)=0.
\end{equation}
This is identical to the IVP \eqref{eq:scalar_phi} defining the scalar fundamental solution $\phi(x, \lambda)$ scaled by the vector $v$. By the uniqueness of ODE solutions, we have $\Psi(x)v = \phi(x, \lambda)v$.

Applying $\mathbf{M}$ to the eigenvector $v$ yields:
\begin{equation} \label{eq:eigen_mapping}
    \mathbf{M}v = \left( \int_0^1 c(x)\Psi(x)\dd x \right) v  = \left( \int_0^1 c(x)\phi(x, \lambda)\dd x \right) v.
\end{equation}
Recall that the numerator of the transfer function is $\mathbf{N}(\lambda) = \int_0^1 c(x)\phi(x, \lambda)\dd x$. Equation \eqref{eq:eigen_mapping} implies that $\mathbf{N}(\lambda)$ is an eigenvalue of $\mathbf{M}$ corresponding to the eigenvector $v$.

By the spectral mapping theorem, we have:
\begin{equation}
    \det(\mathbf{M}) = \prod_{\lambda \in \sigma(S)} \mathbf{N}(\lambda).
\end{equation}
Therefore, $\mathbf{M}$ is invertible if and only if $\mathbf{N}(\lambda) \neq 0$ for all $\lambda \in \sigma(S)$. \end{myproof}
\begin{myproof}{Proposition \ref{prop:operator-version}}
Since $(\mathcal{A}^p, C^p)$ is not approximately observable in infinite time, there exists a nonzero unobservable subspace
\[
\Ns = \left\{ x^p \in X \mid C^p T^p(t) x^p = 0, \ \forall t \geqslant 0 \right\},
\]
where $T^p(t)$ is the semigroup generated by $\mathcal{A}^p$.

Notice that $\Ns \cap (Z \times \{0\}) = \{0\}$. Indeed, if $(z, 0) \in \Ns$, then $C T(t)z = 0$ for all $t \geqslant 0$, so by the approximate observability of $(A, C)$, $z = 0$.

Let $\pi_W : X \to W$ be the projection onto $W$. The restriction $\pi_W|_\Ns$ is injective because if $(z_1, w)$ and $(z_2, w)$ are in $\Ns$, then $(z_1 - z_2, 0) \in \Ns$, so $z_1 = z_2$. Define
$
W_2 = \pi_W(\Ns) \subseteq W,
$
which is a $S-$invariant finite-dimensional subspace of $W$. Let $W_1$ be a complement of $W_2$ in $W$, so that $W = W_1 \oplus W_2$.

By the injectivity of $\pi_W|_\Ns$, for each $w_2 \in W_2$, there exists a unique $z \in Z$ such that $(z, w_2) \in \Ns$. Define the linear map $L : W_2 \to Z$ by $L(w_2) = z$. Then
$
\Ns = \left\{ (L w_2, w_2) \mid w_2 \in W_2 \right\}.
$
Clearly,  $L$ is bounded.

Now we show that $\Ns \subset D(\mathcal{A}^p)$ and $\mathcal{A}^p \Ns \subset \Ns$. Since $\Ns$ is finite-dimensional and $T^p(t)$-invariant, the restriction $T^p(t)|_\Ns$ is a strongly continuous semigroup on  $\Ns$. On finite-dimensional spaces, all strongly continuous semigroups are uniformly continuous and generated by a bounded linear operator $A_\Ns : \Ns \to \Ns$. 
Clearly, $A_\Ns$ must be the restriction of $A$ to $\Ns$, hence
 $\Ns \subset D(\mathcal{A}^p)$ and $\mathcal{A}^p \Ns \subset \Ns$.
 Expressing $S$ in block form with respect to the decomposition $W = W_1 \oplus W_2$:
$
S = \begin{bmatrix} S_{11} & 0 \\ S_{21} & S_{22} \end{bmatrix},
$
where
$S_{11}: W_1 \to W_1$,
$S_{21}: W_1 \to W_2$,
$S_{22}: W_2 \to W_2$.
For any $w_2\in W_2$, we have 
\begin{equation}\label{AP}
	\mathcal{A}^p \bigl[\begin{smallmatrix} L w_2 \\ w_2 \end{smallmatrix}\bigr]  = \bigl[\begin{smallmatrix}A L w_2 + P w_2 \\ S w_2 \end{smallmatrix} \bigr]\in \Ns,
\end{equation}
which implies 
$
A L w_2 + P w_2 = L (S w_2) = L (S_{22} w_2).
$
Decomposing $P$ as $P = [P_1\  P_2]$ with respect to $W = W_1 \oplus W_2$, we have $P w_2 = P_2 w_2$, so
\begin{equation}\label{AL}
	A L w_2 + P_2 w_2 = L S_{22} w_2 \quad \forall w_2 \in W_2. 
\end{equation}
For any $w_2\in W_2$, we have
\[
C^p \sm{L w_2\\ w_2}  = C L w_2 + Q w_2 = 0.
\]
Decomposing $Q$ as $Q = [Q_1\  Q_2]$ with respect to $W = W_1 \oplus W_2$, we have $Q w_2 = Q_2 w_2$, so
$
C L w_2 + Q_2 w_2 = 0, \forall w_2 \in W_2,
$
which implies
\begin{equation}\label{CLQ}
	C L + Q_2 = 0. 
\end{equation}
Define the transformation $\mathcal{R} : X \to X$ by
\begin{equation}
	\mathcal{R}(z, w_1, w_2) = (z - L w_2, w_1, w_2).
\end{equation}
This is bounded and invertible, with inverse
\[
\mathcal{R}^{-1}(\tilde{z}, \tilde{w}_1, \tilde{w}_2) = (\tilde{z} + L \tilde{w}_2, \tilde{w}_1, \tilde{w}_2).
\]
In the new coordinates, the unobservable subspace becomes
\[
\tilde{\Ns} = \mathcal{R}(\Ns) = \left\{ (0, 0, w_2) \mid w_2 \in W_2 \right\}.
\]
Now compute the transformed operators. For the input operator, we have
$
\tilde{B}^p = \mathcal{R} B^p = \begin{psmallmatrix} B \\ 0 \end{psmallmatrix}.
$
For the output operator, let $\tilde{x}^p = (\tilde{z}, \tilde{w}_1, \tilde{w}_2)$. Then
$$
\tilde{C}^p \tilde{x}^p = C^p \mathcal{R}^{-1} \tilde{x}^p = C(\tilde{z} + L \tilde{w}_2) + Q_1 \tilde{w}_1 + Q_2 \tilde{w}_2.
$$
Using \dref{CLQ}, $C L + Q_2 = 0$, we get
$
\tilde{C}^p \tilde{x}^p = C \tilde{z} + Q_1 \tilde{w}_1,
$
so
$
\tilde{C}^p = (C \ Q_1 \ 0) = (C \ \tilde{Q}), $ where $\tilde{Q} = (Q_1, 0).
$
For the state operator,
$
\mathcal{R}^{-1} \tilde{x}^p = (\tilde{z} + L \tilde{w}_2, \tilde{w}_1, \tilde{w}_2),
$
and since $S_{12} = 0$, we have
\begin{equation}\label{AT}
	\mathcal{A}^p \mathcal{R}^{-1} \tilde{x}^p =\begin{medsize} \begin{bmatrix}
A \tilde{z} + A L \tilde{w}_2 + P_1 \tilde{w}_1 + P_2 \tilde{w}_2 \\
S_{11} \tilde{w}_1 \\
S_{21} \tilde{w}_1 + S_{22} \tilde{w}_2
\end{bmatrix}
\end{medsize}.
\end{equation}
Applying the transformation $\mathcal{R}$ to \dref{AT} and utilizing the relation \dref{AL}, we obtain the triangular structure:
\begin{equation}
    \tilde{A}^p = \mathcal{R} \mathcal{A}^p \mathcal{R}^{-1} = 
   \begin{medsize} 
\begin{bmatrix}
    A & P_1 - L S_{21} & 0 \\
    0 & S_{11} & 0 \\
    0 & S_{21} & S_{22}
\end{bmatrix}
\end{medsize}.
\end{equation}
It follows that the system dynamics satisfy \[\tilde{A}^p \tilde{x}^p = \col\{ A \tilde{z} + (P_1 - L S_{21}) \tilde{w}_1, S_{11} \tilde{w}_1, S_{21} \tilde{w}_1 + S_{22} \tilde{w}_2 \}.\]
Let $\tilde{P} = [\tilde{P}_1\ 0]$ and  $\tilde{P}_1=P_1 - L S_{21}$. Then
\[
\tilde{A}^p = \begin{bmatrix} A & \tilde{P} \\ 0 & \tilde{S} \end{bmatrix}, \quad 
\tilde{S} = \begin{bmatrix} S_{11} & 0 \\ S_{21} & S_{22} \end{bmatrix}, \quad 
\tilde{P} = [\tilde{P}_1\ 0].
\]
Finally, we show the reduced subsystem $\big( [ \begin{smallmatrix} A & P_1 \\ 0 & S_{11} \end{smallmatrix} ], [C \; Q_1] \big)$ is approximately observable in infinite time. Suppose  $(z, w_1)\neq 0$ is unobservable for this subsystem, i.e.,
\[
(C \;\; Q_1) T_{\text{red}}(t) \begin{psmallmatrix} z \\ w_1 \end{psmallmatrix} = 0 \ \quad \forall t \geqslant 0,
\]
where $T_{\text{red}}(t)$ is the semigroup generated by $\begin{psmallmatrix} A & P_1 \\ 0 & S_{11} \end{psmallmatrix}$. Consider the state $(z, w_1, 0)$ in the transformed full system. Its output is:
\[
\tilde{C}^p T^p(t) (z, w_1, 0) = (C \ Q_1) T_{\text{red}}(t) \begin{psmallmatrix} z \\ w_1 \end{psmallmatrix} = 0 \ \quad \forall t \geqslant 0,
\]
so $(z, w_1, 0) \in \tilde{\Ns}$. But $\tilde{\Ns} = \{ (0, 0, w_2) \mid w_2 \in W_2 \}$, so $z = 0$ and $w_1 = 0$. Hence, the reduced subsystem is approximately observable.
\end{myproof}
\begin{myproof}{Proposition \ref{propo:SGobservability}}
Assume, by contradiction, that $(S, \Gamma)$ is not observable. Since $S$ is a matrix with $\sigma(S) \subset \mathbb{C}_+$, there exists $\lambda \in \sigma(S)$ with $\Re \lambda \geqslant 0$ and a corresponding eigenvector $w \neq 0$ such that $Sw = \lambda w$ and $\Gamma w = 0$.

Consider $x_0 = \sm{\Pi w \\ w }$.
 Applying the first equation of \eqref{eq:regulator} to $w$ gives
\[
\Pi Sw = A(\Pi w) + B(\Gamma w) + Pw.
\]
Since $Sw = \lambda w$ and $\Gamma w = 0$, this simplifies to:
$
\lambda \Pi w = A(\Pi w) + Pw.
$
Therefore, $A(\Pi w) + Pw = \lambda \Pi w \in Z$, which implies that $x_0 \in \mathcal{D}(\mathcal{A}^p)$. Then 
\[
\mathcal{A}^p x_0 
= \begin{pmatrix} A\Pi w + Pw \\ Sw \end{pmatrix} = \begin{pmatrix} \lambda \Pi w \\ \lambda w \end{pmatrix} = \lambda x_0.
\]
From the second equation of \eqref{eq:regulator},
$
C^px_0 = (C\Pi + Q)w = 0.
$
Thus, $x_0$ is an eigenvector of $\mathcal{A}^p$ with eigenvalue $\lambda$ and $C^p x_0 = 0$, contradicting the approximate observability of $(\mathcal{A}^p, C^p)$. Hence, $(S, \Gamma)$ is observable.
\end{myproof}

\begin{myproof}{Theorem \ref{thm:main2}}
Following \cite{katz2022delayed}, define the Lyapunov functional:
\begin{align}\label{Lya:vh}
	V_h(t)= V(t)+ V_{S_1}(t)+ V_{R_1}(t),
\end{align}
where $V(t)$ is defined in \dref{ly:V}, and
\begin{align*}
	&V_{S_1}(t)= \int_{t-\tau_M}^{t} e^{-2\delta_0 (t-\tau)}\| X_0(\tau)\|_{S_1}^2 \dd \tau, \\
	&V_{R_1}(t)= \tau_M\int_{-\tau_M}^{0} \int_{t+\theta}^t e^{-2\delta_0 (t-\tau)}\| \dot{X}_0(\tau) \|_{R_1}^2 \dd \tau, 
\end{align*}
where $V_{S_1}, V_{R_1}$  are introduced for compensating $v_{\tau}$. Differentiation of $V_{S_1}(t)$ and $V_{R_1}(t)$ yields
{\small \begin{subequations}\label{ly:Differentiation-V_R-V_s}
\begin{align}
    \dot{V}_{S_1} &+ 2 \delta_0 V_{S_1} \leqslant \| X_0 \|_{S_1}^2 - \varepsilon_M \|X_0(t)- v_{\tau}(t)- \theta_{\tau} (t) \|_{S_1}^2, \\
    \dot{V}_{R_1} &+ 2 \delta_0 V_{R_1} \leqslant \tau_M^2 \|\dot{X}_0\|_{R_1}^2 - \varepsilon_M \Psi_1^\top \sm{R_1 & G_1 \\ * & R_1 } \Psi_1, \label{ineq:park1}
\end{align}
\end{subequations}}%
 where $\Psi_1=\col \{v_\tau, \theta_\tau \}$ and the inequality in \eqref{ineq:park1} follows from Jensen's and Park's inequalities \cite{MR3237720}. 
 To compensate $\zeta(t)$, we have 
\begin{multline}\label{ly:ineq-Hala}
    -2\delta_1 \sup_{-\tau_M \leqslant \theta \leqslant 0} V_h(t+\theta) \leqslant -2\delta_1\| X_0(t)-v_{\tau}(t)\|_P^2 \\
    - 2 \delta_1 \|c\|_N^{-2} \zeta^2(t-\tau).
\end{multline}
Denote
 \begin{equation}\label{def:Halanay}
 	\mathcal{H}_d(t):= \dot{V}_h(t)+2 \delta_0 V_h(t) -2\delta_1 \sup_{-\tau_M \leqslant \theta \leqslant 0} V_h(t+\theta).
 \end{equation}
Let $ \eta(t)= \col \{X_0(t), \zeta(t-\tau),  v_{\tau}(t), \theta_{\tau}(t) \} $.
Taking the derivative of $V(t)$ as defined in \dref{ly:V} along \dref{eq:clo-OutputDelay} gives
\begin{equation}\label{ly:derivative-V}
	\begin{aligned}	
	&\dot{V}(t)+2\delta_0 V(t)= X_0^\top(t) (P F + F^T P+2 \delta_0 P) X_0(t)\\ 
	& +2 X_0^\top(t) P \Lscr \zeta(t-\tau)-2 X_0^\top (t) P \Lscr_2 v_{\tau}(t) \\
& + 2\sum_{n=N+1}^\infty (-\lambda_n+\delta_0)\, z_n^2(t)
+ 2\sum_{n=N+1}^\infty z_n(t)\, b_n\, \bar{K}\, X_0(t).
	\end{aligned}
\end{equation}
Let $ \eta_1(t)= \col \{X_0(t), \zeta(t-\tau),  v_{\tau}(t), \theta_{\tau}(t)\}$. 
Based on \eqref{ineq:1}, \eqref{ineq:2}, \dref{ly:Differentiation-V_R-V_s},  \dref{ly:ineq-Hala} and \dref{ly:derivative-V}, provided $M_\alpha<0$ and $\alpha>\frac{1}{2}$, we have that
\[\mathcal{H}_d(t) \leqslant \eta_1(t)^\top (\bar{H}+ \tau_M^2 \Phi_1^\top  R_1 \Phi_1)\, \eta_1(t)+ 2 \sum_{n=N+1}^\infty \rho_n z_n^2 ,\]
where $\rho_n=-\lambda_n+\delta_0+\frac{1}{2\alpha} \lambda_{n}.$  By the monotonicity of $\lambda_n$, $M_\alpha<0$ always holds for large $N$ and $\alpha>\frac{1}{2}$.

To show the feasibility of $\mathcal{H}_d(t)\leqslant 0,$ it suffices to show $\bar{H}+\tau_M^2 \Phi_1^\top  R_1 \Phi_1\leqslant 0$, since $\rho_n< 0$ for large $N$ and $\alpha>\frac{1}{2}$. For the asymptotic feasibility of $\bar{H}+\tau_M^2 \Phi_1^\top  R_1 \Phi_1\leqslant 0$ with large  $N$  and small $\tau_M$, let $S_1=G_1=0$ and $\tau_M$ tend to $0^+$. It is sufficient to show $ \Omega=\left[
\begin{matrix}
\Omega_1   & \Lambda_3  \\
* & \Omega_2
\end{matrix}
\right]\leqslant 0$, where  
\[ \begin{aligned}
\Omega_1 &= \begin{pmatrix} M_{11} & M_{12} \\ * & H_{22} \end{pmatrix}, \;
\Lambda_3 = \begin{pmatrix} -P\mathcal{L}_2 + 2\delta_1 P & 0 \\ 0 & 0 \end{pmatrix}, \\
\Omega_2 &= \text{diag}\left\{-\delta_1 P - R_1, \ -\varepsilon_M R_1 \right\}.
\end{aligned}\]
 Note the norms of $P$, $\Lscr$ and $\Lscr_2$ are independent of $N$.
 Taking $R_1=N^{2.5} I,$ using Schur complement, by  arguments similar to the proof of Theorem \ref{thm:main}, it can be obtained that $\Omega\leqslant 0$ for large enough  $N$.
 \end{myproof}
 

\bibliographystyle{plain}        
\bibliography{References}           
\appendix

\end{document}